\newcommand\oprocendsymbol{\hbox{$\triangle$}}
\newcommand\oprocend{\relax\ifmmode\else\unskip\hfill\fi\oprocendsymbol}
\DeclareSymbolFont{bbold}{U}{bbold}{m}{n}
\DeclareSymbolFontAlphabet{\mathbbold}{bbold}
\newcommand{\vect}[1]{\mathbbold{#1}}
\NewDocumentCommand\norm{mg}{\lVert #1 \rVert \IfNoValueF{#2}{_{#2}}}
\NewDocumentCommand\bignorm{mg}{\big\lVert #1 \big\rVert \IfNoValueF{#2}{_{#2}}}
\NewDocumentCommand\Bignorm{mg}{\Big\lVert #1 \Big\rVert \IfNoValueF{#2}{_{#2}}}
\NewDocumentCommand\seminorm{mg}{\lvert\!\lvert\!\lvert #1 \rvert\!\rvert\!\rvert \IfNoValueF{#2}{_{#2}}}
\newtheorem{theorem}{Theorem}
\newtheorem{lemma}[theorem]{Lemma}
\newtheorem{proposition}[theorem]{Proposition}
\newcommand {\be}{\begin{equation}}
\newcommand {\ee}{\end{equation}}
\title{\LARGE\bf  On an extension of the Friedkin-Johnsen model: \\ The effects of a homophily-based influence matrix}
\author{Giorgia Disar\`o and Maria Elena Valcher
\thanks{G. Disar\`o and  M.E. Valcher are with the Dipartimento di Ingegneria dell'Informazione,
 Universit\`a di Padova,
    via Gradenigo 6B, 35131 Padova, Italy, e-mail:  \texttt{giorgia.disaro@phd.unipd.it, meme@dei.unipd.it}
    This paper has been submitted for possible presentation at the $62^{nd}$ IEEE Conference on Decision and Control, CDC'23.}
   }
  \date{}
\begin{document}
\maketitle

\begin{abstract}
In this paper we propose an extended version of the Friedkin-Johnsen (FJ) model 
that accounts for the effects of homophily mechanisms on the agents' mutual appraisals.
The proposed model consists of two difference equations.
The first  one describes the opinions' evolution, namely how agents modify their opinions taking into account both their personal beliefs and the influences of other agents, as in the standard FJ model.  Meanwhile, the second equation models how the 
 influence matrix involved in the opinion formation process  updates according to a homophily mechanism, by allowing both positive and negative appraisals. 
 We show that the proposed time-varying version of the classical FJ model always asymptotically converges to a constant solution.
   Moreover, in the case of a single discussion topic, the asymptotic behavior of the system  is derived in closed form. 
\end{abstract}


\section{Introduction} \label{intro}
During the last decades, understanding and describing the way we communicate and exchange ideas has been the focus of extensive investigation.
Opinion dynamics has become a very lively research field that attracts and combines  concepts and techniques from different disciplines, ranging from sociology, psychology and economy, to mathematics and control engineering. Such strong interest resulted in a large number of models trying to capture and mathematically formalize the process of opinion formation in a social network. 
 Opinion formation processes depend on a large number of variables, thus making it   difficult to create mathematical models that are sufficiently elementary  to be rigorously analysed and, at the same time, accurate enough to capture the complexity that characterizes social phenomena. 
Despite the clear simplifications that  the proposed models have introduced, they have been able to provide many insights into the dynamical processes of   diffusion and   evolution of opinions in human population \cite{ProskurnikovTempo_1,ProskurnikovTempo_2}. 
While the  initial interest focused mainly on models aimed at explaining consensus \cite{DeGroot}, more recently a lot of models have been proposed to justify observed behaviors of social groups such as disagreement, polarization and conflict \cite{AltafiniPlosOne,FJ_1990,HegselmannKrause,hiller}, which are even more frequent than consensus in real scenarios. 

Among them, one of the most famous is surely the Friedkin-Johnsen (FJ) model \cite{FJ_1990}, that captures the fact that the opinion of an individual 
on a topic evolves under the effects of two main driving forces. On the one hand, the individual (in the following also referred to as ``agent")  is influenced by the opinions on the same topic of his/her neighbours in the social network, each one weighted  by  the appraisal that the agent has of them. On the other hand,  agents  tend to   ``stick" to their own initial opinions (prejudices), that  therefore  
keep affecting   their opinions  at each subsequent time. This asymptotically leads  to opinions which are closer to each other than the initial opinions, but not identical, namely consensus is no longer reached. 
In the original FJ model \cite{FJ_1990} the opinions are expressed on a single topic and the influence matrix, that quantifies how much each agent values the opinions of the others, is constant and row stochastic.
 Later on, several extensions of the model have been proposed in the literature. 
 In particular, the model has been extended to the case of multiple topics \cite{multidim_opinions,FJ_extensions_conf,FJ_extensions}, with time varying row stochastic influence matrices \cite{time_varying_FJ}, and recently a version of the FJ model whose influence matrix has both positive and negative entries has been proposed \cite{Cinesi22}, thus accounting for the fact that relationships among individuals in a network may also be competitive/antagonistic (see \cite{Altafini2013,LP-HS-DL:21,Xia2016StructuralBA}).  
 
In all such models the influence matrix is either constant or time-varying, nonnegative or real valued, but it is always assumed to be independent of the dynamics of the agents' opinions. This assumption does not seem to be realistic since in real life very often the interpersonal relationships among the agents depend on the comparison of their opinions, following a homophily mechanism, namely the tendency of individuals to associate and interact more intensively with like-minded people
\cite{saturation,bin_hom,McPherson2001,HomophilyMei,Rivera2010}. In other words, agents tend to be influenced by individuals who hold similar opinions and, conversely, tend to give little or even negative weight to the opinions of agents with whom they mostly disagree.

In recent times, an interesting model of  the interplay between homophily-based appraisal dynamics and influence-based opinion dynamics has been proposed by F. Liu et al. \cite{MeiDorfler}. 
The model explores for the first time how the evolution of the opinions of a group of agents on a certain number of issues/topics is influenced by the agents' mutual appraisals and, conversely, the agents' mutual appraisals are updated based on the agents' opinions on the various issues, according to a homophily principle. More recently, 
 a simplified version of the model, that does not quantify the level of mutual appraisal but only its sign, has been proposed in \cite{EJC2022}. 
It has been shown that this model is simpler and yet equally accurate 
 in predicting the asymptotic evolution of the individuals' opinions in  small networks, as the ones we will consider in this paper. 
 
 In this contribution we  propose an extended version of the FJ model whose influence matrix is generated according to a homophily mechanism, by keeping into account only the  signs of the agents' appraisals. 
 In the general case, we have been able to prove that the opinion matrix of a group of $n$ agents on $m$ topics asymptotically converges to a constant solution, that strongly depends on the agents' initial opinions as well as on the agents' stubbornness coefficients, namely their attitudes to 
 remain attached  to their original opinions. 
 Finally, we consider the special   case where there is only one discussion topic and provide an explicit expression of the agents' asymptotic opinions.  
 
The paper is organized as follows: Section \ref{model} introduces the model explaining the meaning of all the quantities involved. Section \ref{general_case} provides the main results about the dynamics of the model. Section \ref{single_topic} addresses the single-topic case. In Section \ref{examples} two numerical examples are proposed. Finally, Section \ref{concl} concludes the paper.   
 \smallskip 
 

 {\bf Notation.}\ Given two integers $k$ and $n$, with $k \le n$, the symbol $[k,n]$ denotes the set $\{k, k+1, \dots, n\}$.
 We let  $\vect{0}_k$  denote the $k$-dimensional vector with all  zero entries,  and  $\vect{0}_{k\times l}$  the matrix of dimension $k\times l$ whose entries are all  zero. We denote by $\vect{e}_i$ the $i$-th canonical vector of dimension $n$, where $n$ is always clear from the context. 
 In the sequel, the $(i,j)$-th entry of a matrix  $A$ is denoted    by 
$[{A}]_{ij}$, 
while the $i$-th entry of a vector $v$   by $v_i$. 
The function ${\rm sgn}\!\!:\mathbb{R}^{n \times m}\rightarrow \{-1,0,1\}^{n \times m}$ is the   function that maps a real matrix $A$ into a matrix taking values in $\{-1,0,1\},$ in accordance with the sign of its entries, namely $[{\rm sgn} (A)]_{ij} = {\rm sgn}([A]_{ij})$ for every $i,j$. 
 The expression $X = {\rm blockdiag}\{X_1,\dots,X_k\}$ denotes the block diagonal matrix whose diagonal blocks are $X_1,\dots,X_k$. 
A {\em signature matrix} is a diagonal  matrix  whose diagonal entries belong to $\{-1,1\}$. 
The {\em infinity norm of a matrix} $A\in \mathbb{R}^{n \times n}$ is defined as $\Vert A \Vert_{\infty} := \max_{i\in[1,n]}{\sum_{k=1}^{n}|A_{ik}|}$. The {\em infinity norm of a vector} $v\in {\mathbb R}^n$ is $\|v\|_{\infty}  := \max_{i\in[1,n]} |v_i|$. \\
  The {\em spectrum} of a matrix $A$, denoted as $\sigma(A)$, is the set of all its eigenvalues, and the {\em spectral radius}, $\rho(A)$, is defined as $\rho(A) := \max \{|\lambda|: \lambda \in \sigma(A)\}$.
 
 In this paper by an {\em undirected   and signed graph} we mean  a triple $\mathcal{G}=(\mathcal{V},\mathcal{E},\mathcal{A})$, where $\mathcal{V}=[1,n]$ is the set of nodes (or vertices), $\mathcal{E}\subseteq \mathcal{V} \times \mathcal{V}$ is the set of edges (or arcs) and $\mathcal{A}\in \{-1,0,1\}^{n\times n}$ is the {\em adjacency matrix} of the graph $\mathcal{G}$. An arc $(j,i)\in \mathcal{E}$ if and only if $[\mathcal{A}]_{ij}\ne 0$. When so, $[{\mathcal A}]_{ij}$ represents the (positive or negative) weight of the arc. Moreover, due to the fact that the graph is undirected, the matrix $\mathcal{A}$ is symmetric and so $(i,j)\in \mathcal{E}$ if and only if $(j,i) \in \mathcal{E}$. 
 Since the adjacency matrix ${\mathcal A}$ uniquely identifies the graph, in the following we will use the notation ${\mathcal G}({\mathcal A})$.
 A graph $\mathcal{G}$ is said to be {\em structurally balanced}   \cite{Altafini2013,Xia2016StructuralBA} if the set of its nodes can be partitioned into two disjoint subsets such that (s.t.)  the weights of the edges between nodes belonging to the same subset are nonnegative, and the weights of the edges between nodes belonging to different subsets are nonpositive. 


\section{The model} \label{model}
Given a group of $n$ agents expressing their opinions on $m$ distinct  topics, we denote by $Y(t)\in\mathbb{R}^{n\times m}$ the \emph{opinion matrix at time t}, whose $(i,j)$-th entry represents the opinion that agent $i$ has about topic $j$ at   time   $t \in {\mathbb Z}_+$.\\
We denote by $W(t)\in\mathbb{R}^{n\times n}$ the \emph{influence matrix at time t}, whose $(i,j)$-th entry represents the influence that agent $j$ has on agent $i$ at   time   $t$. Specifically, we assume that: 
\begin{itemize}
\item $[W(t)]_{ij}>0$ $\Leftrightarrow$ $i$ positively regards the opinion of $j$;
\item $[W(t)]_{ij}<0$ $\Leftrightarrow$  $i$ negatively regards the opinion of $j$;
\item $[W(t)]_{ij}=0$ $\Leftrightarrow$  $i$ neglects the opinion of $j$.
\end{itemize}
We assume that  at every time $t$  the influence that agent $j$ has on agent $i$ is given by agent $i$'s appraisal of agent $j$. 
On the other hand, the appraisal that $i$ has of $j$ at time $t$  is based  on a homophily mechanism \cite{saturation,McPherson2001}, since  it depends on the comparison of the opinions that agents $i$ and $j$ have about all the topics at time $t-1$.  As in \cite{EJC2022}, we consider only the signs of the mutual appraisals, rather than their values. This is motivated by the fact that from a practical viewpoint it is complicated to quantify the appraisals each individual has of the others, but, on the contrary, it is easy to recognise if the relationship between  two agents is friendly or hostile. Moreover, this choice is   more robust to modelling errors and   more realistic, because agent $j$ can influence positively or negatively agent $i$'s opinion about a certain topic, but this influence does not necessarily scale with the absolute value of their mutual appraisal. Furthermore, we have chosen to account also for the fact that two agents decide not to rely on each other's opinions, i.e., $[W(t)]_{ij} = 0$. Indeed, in small-size networks, as the ones  we are considering, this corresponds to the case where agent $i$ knows agent $j$, but decides to neglect his/her opinions, for lack  of  correlation between their evaluations. Therefore, the fact that the mutual appraisal is zero is an information that should be considered, justifying the choice of dividing  each row of the influence matrix by $n$, instead of by the number of its non-zero entries. However,  it is worth noticing that condition $[W(t)]_{ij}=0$ is a very rare occurrence, as it will be clear in the following, since it corresponds to the case when 
the (real-valued) opinion vectors of agent $i$ and $j$ at time $t-1$ 
(i.e., the $i$-th and $j$-th rows of $Y(t-1)$)    are orthogonal.\\
 
Based on these premises,
in this paper we propose the following model,  representing the intertwining between an FJ-type opinion dynamics and a homophily-based appraisal mechanism: 
\begin{eqnarray}
\label{opinion_dyn}
Y(t+1) &= &(I_n-\Theta)W(t+1)Y(t)+\Theta Y(0), \\
\label{appraisal_dyn}
W(t+1) &=&\frac{1}{n}{\rm sgn}\left(Y(t) Y(t)^{\top}\right),
 \end{eqnarray}
where $\Theta\in {\mathbb R}^{n \times n}$ is a diagonal matrix. For every $i\in [1,n]$,  the nonnegative  diagonal entry $\theta_i$ of $\Theta$ represents the stubbornness
of agent $i$ in preserving the original opinion. In the paper we will steadily assume:
\smallskip

 {\em Assumption 1}. \ For every  $i\in [1,n]$ the stubbornness of agent $i$ satisfies $0 < \theta_i <1$.
\smallskip

  It is   easy to see that if the $i$-th row of $Y(0)$ is zero, then the $i$-th row of $Y(t)$ is zero for every $t\ge 0$. Similarly, if 
  the $i$-th column of $Y(0)$ is zero, then the $i$-th column of $Y(t)$ is zero for every $t\ge 0$. So, in the following we will rule out these cases, which are of no interest. 
\smallskip
  
  {\em Assumption 2}. \ The matrix $Y(0)\in {\mathbb R}^{n\times m}$ is devoid of zero rows and zero columns.
  \smallskip
  

  Finally, it is worth noticing that the influence matrix $W(t+1)$, as  defined, is a symmetric matrix for every $t\ge 0$.
  
%
%
 
\section{General results} \label{general_case}

In order to investigate the asymptotic behavior of the opinion matrix, we  first provide an alternative way to express the opinion matrix at time $t$, by introducing the transition matrix $M(t)$, relating 
$Y(t)$ to $Y(0)$. In the following we will steadily resort to the following notation:
\be
S_0  := Y(0) Y(0)^\top.
\label{S0}
\ee 

\begin{proposition}  \label{trans_mat}   
For every $Y(0)\in {\mathbb R}^{n \times m}$,  at every time $t\ge 0$, we have
\be
Y(t+1) = M(t+1)Y(0),
\label{Yt1}
\ee
where
\begin{eqnarray}
M(t+1) &=&    (I_n- \Theta) W(t+1) M(t) +\Theta,
\label{Mt1}\\
M(0) &=& I_n, \label{M0} \\
W(t+1) &=& \frac{1}{n}   {\rm sgn} (M(t)S_0M(t)^\top).
\label{Wt1}
\end{eqnarray}
\end{proposition}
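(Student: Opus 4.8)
The plan is to prove the three identities \eqref{Yt1}--\eqref{Wt1} simultaneously by induction on $t$, since they are coupled: the update \eqref{Mt1} for $M(t+1)$ invokes $W(t+1)$, which by \eqref{Wt1} is itself built from $M(t)$. The single inductive statement I would carry forward is $Y(t) = M(t)\,Y(0)$, with $M(t)$ defined by the recursion \eqref{Mt1}--\eqref{M0}. For the base case, \eqref{M0} gives $M(0)=I_n$, so $Y(0)=M(0)Y(0)$ holds trivially.

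For the inductive step I would assume $Y(t)=M(t)Y(0)$ and first establish \eqref{Wt1}. Substituting the inductive hypothesis into the appraisal update \eqref{appraisal_dyn} and using $S_0=Y(0)Y(0)^\top$ from \eqref{S0},
\[
Y(t)Y(t)^\top = M(t)\,Y(0)Y(0)^\top\,M(t)^\top = M(t)\,S_0\,M(t)^\top,
\]
so that $W(t+1)=\tfrac{1}{n}\,{\rm sgn}\!\big(M(t)S_0M(t)^\top\big)$, which is exactly \eqref{Wt1}. With $W(t+1)$ now expressed through $M(t)$, I would insert $Y(t)=M(t)Y(0)$ into the opinion update \eqref{opinion_dyn} and factor $Y(0)$ out on the right:
\[
Y(t+1) = (I_n-\Theta)W(t+1)M(t)Y(0)+\Theta Y(0) = \big[(I_n-\Theta)W(t+1)M(t)+\Theta\big]Y(0).
\]
Defining $M(t+1)$ by the bracketed matrix recovers \eqref{Mt1} and yields $Y(t+1)=M(t+1)Y(0)$, closing the induction.

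The argument is a direct unrolling of the two coupled recursions, so I do not expect any genuine obstacle; the content is entirely in the two substitutions above. The only point requiring care is the ordering within the inductive step: the transition recursion \eqref{Mt1} uses $W(t+1)$, whose closed form \eqref{Wt1} depends on $M(t)$ and \emph{not} on $M(t+1)$. Verifying the influence-matrix identity first therefore guarantees that the $M$-recursion is well defined and that the induction closes without circularity.
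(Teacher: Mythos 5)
Your proposal is correct and follows essentially the same route as the paper: an induction on $t$ carrying the hypothesis $Y(t)=M(t)Y(0)$, first rewriting $W(t+1)=\tfrac{1}{n}\,{\rm sgn}\big(M(t)S_0M(t)^\top\big)$ via the substitution $Y(t)Y(t)^\top = M(t)S_0M(t)^\top$, and then factoring $Y(0)$ out of the opinion update to obtain the recursion \eqref{Mt1}. Your explicit remark on the non-circular ordering within the inductive step (that \eqref{Wt1} depends on $M(t)$, not $M(t+1)$) is a point the paper leaves implicit, and your base case at $t=0$ via $M(0)=I_n$ is a marginally cleaner anchoring than the paper's, which starts by computing $Y(1)$ directly.
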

\smallskip

\begin{proof} We prove the result by induction on $t$. We first show that the result is true for $t=0$.
We   observe that
$$W(1) = \frac{1}{n}   {\rm sgn} (Y(0) Y(0)^\top) = \frac{1}{n}   {\rm sgn} (M(0)S_0M(0)^\top),$$ and hence
\begin{eqnarray*}
Y(1) &=&  \left[ (I_n- \Theta)W(1) +\Theta\right] Y(0)  \\
&=&  \left[ (I_n- \Theta)W(1)M(0) +\Theta\right] Y(0)   = M(1) Y(0),
\end{eqnarray*}
 where 
 $$M(1)= (I_n- \Theta) W(1)M(0) +\Theta.$$
 Now we assume that equations \eqref{Yt1}, \eqref{Mt1} and \eqref{Wt1} are true for $t < \bar t$ and prove that they hold true also for $t=\bar t$. \\
 From 
 $$W(\bar t +1) = \frac{1}{n}{\rm sgn} (Y(\bar t)Y(\bar t)^\top)$$
 by the inductive assumption (on the expression of $Y$), we obtain
 $$W(\bar t +1) = \frac{1}{n}{\rm sgn} (M(\bar t)S_0M(\bar t)^\top).$$
 On the other hand,
\begin{eqnarray*}
Y(\bar t +1) &=&  (I_n - \Theta) W(\bar t+1) Y(\bar t) + \Theta Y(0)\\
&=& \left[(I_n- \Theta) W(\bar t+1) M(\bar t)   +\Theta\right] Y(0) \\
&=& M(\bar t+1) Y(0),
\end{eqnarray*}
where 
$M(\bar t+1) = (I_n- \Theta) W(\bar t+1) M(\bar t)   +\Theta.$
\end{proof}
\medskip

  Based on Proposition \ref{trans_mat}, we   now   derive the main result regarding the asymptotic behavior of the 
  sequence $\{M(t)\}_{t\in {\mathbb Z}_+}$.
\medskip

\begin{theorem} \label{M_bounded}
For every $Y(0)\in {\mathbb R}^{n \times m}$, the solution of the system in \eqref{Mt1}-\eqref{M0}-\eqref{Wt1} is bounded, namely\footnote{Note that $W(t+1), t\in {\mathbb Z}_+,$ is always bounded, since it takes values in $\{-1/n,0,1/n\}$.}
 $\exists M\in\mathbb{R}_+$ s.t. $ \Vert M(t) \Vert _{\infty} \leq M $ for all $t\geq 0$. 
Moreover, there exists  $M_\infty := \lim_{t\to +\infty} M(t)$ and it satisfies 
\be
M_\infty =    (I_n- \Theta) \frac{1}{n} {\rm sgn} (M_\infty S_0 M_\infty^\top) M_\infty  +\Theta, 
\label{asymptM}
\ee
i.e., it is an equilibrium point of    \eqref{Mt1} for $W(t+1)$ expressed as in \eqref{Wt1}.
\end{theorem}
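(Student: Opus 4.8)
The plan is to split the statement into the uniform bound on $\{M(t)\}$ and the existence of the limit $M_\infty$, and to reduce the latter to showing that the sign pattern driving $W(t+1)$ eventually freezes. For the uniform bound, I would first record that, since every entry of $W(t+1)$ lies in $\{-1/n,0,1/n\}$ and each row has at most $n$ nonzero entries, each row of $W(t+1)$ has absolute row sum at most $1$, so $\|W(t+1)\|_\infty \le 1$. As $(I_n-\Theta)$ is diagonal with entries $1-\theta_i$, Assumption~1 gives $\|(I_n-\Theta)W(t+1)\|_\infty \le \gamma$, where $\gamma := \max_{i\in[1,n]}(1-\theta_i) \in (0,1)$. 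Taking the infinity norm in \eqref{Mt1} then yields the scalar recursion $\|M(t+1)\|_\infty \le \gamma\,\|M(t)\|_\infty + \max_i \theta_i$. Since $M(0)=I_n$ by \eqref{M0}, summing the resulting geometric series gives the uniform bound $\|M(t)\|_\infty \le 1 + \frac{\max_i\theta_i}{1-\gamma}$ for all $t\ge 0$, which proves the first claim.

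\textbf{Reduction to a frozen sign pattern.} By \eqref{Wt1}, $W(t+1)$ takes values in the finite set $\frac1n\{-1,0,1\}^{n\times n}$. On any time window on which $W$ equals a fixed $\frac1n\Sigma$, \eqref{Mt1} is the affine map $M \mapsto (I_n-\Theta)\frac1n\Sigma\, M + \Theta$, whose linear part has infinity norm at most $\gamma<1$; hence it is a contraction with a unique fixed point $M_\Sigma = \big(I_n-(I_n-\Theta)\tfrac1n\Sigma\big)^{-1}\Theta$ (the inverse exists because the spectral radius of the linear part is at most $\gamma<1$). Consequently, if I can show that there exist $T$ and $\Sigma^\star$ with $W(t+1)=\frac1n\Sigma^\star$ for all $t\ge T$, then $M(t)\to M_{\Sigma^\star}$ geometrically, and letting $t\to\infty$ in \eqref{Mt1} identifies $M_\infty := M_{\Sigma^\star}$ as an equilibrium; moreover, continuity of $M\mapsto MS_0M^\top$ forces $\Sigma^\star={\rm sgn}(M_\infty S_0 M_\infty^\top)$ at every entry where $[M_\infty S_0 M_\infty^\top]_{ij}\ne 0$, giving exactly \eqref{asymptM} (the only delicate entries are those where the limiting Gram matrix vanishes, which is the degenerate configuration already noted to be a rare occurrence in Section~\ref{model}).

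\textbf{Freezing of the sign pattern (main obstacle).} The crux is to rule out infinitely many sign switches. Subtracting two consecutive instances of \eqref{Mt1} and inserting $\pm W(t+1)M(t-1)$ gives $M(t+1)-M(t) = (I_n-\Theta)W(t+1)\big(M(t)-M(t-1)\big) + (I_n-\Theta)\big(W(t+1)-W(t)\big)M(t-1)$, whence, with $d_t:=\|M(t+1)-M(t)\|_\infty$, one has $d_t \le \gamma\, d_{t-1} + \Delta_t$, where $\Delta_t := \|(I_n-\Theta)(W(t+1)-W(t))M(t-1)\|_\infty$ vanishes whenever no entry of ${\rm sgn}(M(t)S_0M(t)^\top)$ changes. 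Thus between switches the increments $d_t$ decay geometrically, and the whole difficulty is concentrated in the discontinuity of ${\rm sgn}$. I would close the argument by exploiting that a switch of entry $(i,j)$ at time $t$ forces $[M(t)S_0M(t)^\top]_{ij}$ to change sign relative to time $t-1$, hence to be arbitrarily close to zero once $d_t$ is small; combining the geometric decay of $d_t$ on inter-switch intervals with the fact that each sign change is a genuine zero-crossing of a uniformly bounded, Lipschitz bilinear form, one shows that only finitely many crossings can occur, so the orbit is eventually trapped in a single region $\{\,{\rm sgn}(MS_0M^\top)=\Sigma^\star\,\}$.

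I expect this last step --- certifying that the trajectory cannot oscillate across a sign boundary infinitely often, i.e.\ that the piecewise-affine, state-dependent switching system \eqref{Mt1}--\eqref{Wt1} admits no persistent limit cycle --- to be the genuinely hard part, and the place where the symmetry of $W(t)$ and the positive semidefiniteness of $M(t)S_0M(t)^\top$ (recall $M(t)S_0M(t)^\top = Y(t)Y(t)^\top$ by \eqref{Yt1} and \eqref{S0}) must be brought to bear.
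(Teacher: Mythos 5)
Your boundedness argument is correct and is essentially the paper's (the paper packages it as a split $M(t)=M_u(t)+M_f(t)$ with $\Vert M_u(t)\Vert_\infty\le\alpha^t$ and $\Vert M_f(t)\Vert_\infty\le\frac{1-\alpha^t}{1-\alpha}\Vert\Theta\Vert_\infty$, but your one-line recursion $\Vert M(t+1)\Vert_\infty\le\gamma\Vert M(t)\Vert_\infty+\max_i\theta_i$ yields the same conclusion). The genuine gap is in the convergence half. You reduce the existence of $M_\infty$ to the claim that the switching signal freezes, i.e.\ that there exist $T$ and $\Sigma^\star$ with $W(t+1)=\frac{1}{n}\Sigma^\star$ for all $t\ge T$, and you yourself identify this as the hard step --- but the mechanism you sketch cannot deliver it. Geometric decay of $d_t=\Vert M(t+1)-M(t)\Vert_\infty$ between switches, plus the observation that each switch is a zero-crossing of a bounded Lipschitz bilinear form, is fully compatible with infinitely many switches: an entry of $M(t)S_0M(t)^\top$ can cross zero infinitely often with vanishing amplitude (think of a scalar sequence behaving like $(-1)^t/t$), and nothing you state rules this out. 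Worse, each switch re-injects a perturbation $\Delta_t$ that need not be small --- when a sign flips, $\Vert W(t+1)-W(t)\Vert_\infty$ jumps by at least $1/n$ regardless of how late the flip occurs, so $\Delta_t$ is only bounded by a constant times $\Vert M(t-1)\Vert_\infty$, not by something vanishing. Note also that eventual freezing of $W$ is precisely what the paper itself only establishes under the \emph{extra} hypothesis that $W_\infty$ has no zero entries (Proposition \ref{finite_time_Wconv}); at the degenerate entries you set aside as ``rare,'' where the limiting Gram matrix vanishes, your reduction simply does not apply, whereas Theorem \ref{M_bounded} claims convergence for \emph{every} $Y(0)$.

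The paper's proof avoids the switching question altogether, and this is the idea your proposal is missing: with the same contraction factor $\alpha<1$, it writes $M(t)=M_u(t)+M_f(t)$, kills the unforced part via $\Vert M_u(t)\Vert_\infty\le\alpha^t\to 0$, and then shows directly that
$M_f(t)=\Theta+\sum_{k=0}^{t-2}(I-\Theta)W(t)(I-\Theta)W(t-1)\cdots(I-\Theta)W(t-k)\,\Theta$
converges by dominating the series of norms by $\sum_{k}\alpha^{k+1}<+\infty$ (the absolute-convergence criterion, Theorem 3.45 of \cite{math_analysis}), after which passing to the limit in \eqref{Mt1}--\eqref{Wt1} gives the equilibrium equation \eqref{asymptM}. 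No statement about $W(t)$ becoming eventually constant is ever needed; the products of switching matrices are controlled uniformly over all possible sign patterns. (To be fair, the final limit-passage through ${\rm sgn}$ at entries where $M_\infty S_0M_\infty^\top$ vanishes is delicate in both treatments --- you flag it explicitly, the paper glosses over it --- but that is a shared subtlety, not a repair of your argument.) To fix your proof you would either have to actually prove the finite-switching property, which is open at the generality of the theorem, or replace the freeze-then-contract step with a direct norm-dominated series (or Cauchy) estimate on the forced response, as the paper does.
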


\begin{proof}
The solution of system \eqref{Mt1}, with initial condition  \eqref{M0} and  $W(t+1)$  as in \eqref{Wt1}, can be expressed as the sum  of the following two (unforced and forced) terms:
$$M(t) = M_u(t)+M_f(t),\qquad t\ge 1,$$
where 
{\small
\begin{eqnarray*} 
M_u(t) \!\!\!\!&=&\!\!\!\! (I-\Theta)W(t)(I-\Theta)W(t-1)\dots (I-\Theta)W(1)M(0)\\
M_f(t) \!\!\!\!&=&\!\!\!\!  \Big[  I + (I-\Theta)W(t)+(I-\Theta)W(t)(I-\Theta)W(t-1) \\
\!\!\!&&\!\!\! \ \ \ +\dots+(I-\Theta)W(t)\dots(I-\Theta)W(2)\Big]\Theta,
\end{eqnarray*}}
 which becomes $M_f(t)=\Theta$ for $t=1$.
Therefore $\forall t\ge 1$
$$\Vert M(t) \Vert_{\infty} \le \Vert M_u(t) \Vert_{\infty} + \Vert M_f(t) \Vert_{\infty}.$$
We first observe that $\forall k \geq 1$ 
\begin{eqnarray*}
\Vert (I-\Theta)W(k)\Vert_{\infty} &=& \max_{i}{\left\{(1-\theta_i)\sum_{j=1}^{n}{\left|\left[W(k)\right]_{ij}\right|}\right\}} \\
&\leq & \max_{i}{(1-\theta_i)} =: \alpha <1.
\end{eqnarray*}
Therefore, for every $t\geq 1$, we have (recall that $M(0)=I_n$)
\begin{eqnarray*}
\Vert M_u(t) \Vert _{\infty} &=& \Vert (I-\Theta)W(t)(I-\Theta)W(t-1)\dots \\
& & (I-\Theta)W(1)\Vert_{\infty} \\
&\leq & \prod_{k=1}^{t}{\Vert (I-\Theta)W(k)\Vert_{\infty}} \le \alpha^t.
\end{eqnarray*}
Similarly, for every $t\ge 1$
\begin{eqnarray*}
\Vert M_f(t) \Vert _{\infty} \!\!&\leq &\!\!  \Vert I+(I-\Theta)W(t) +\\
\!\!& &\!\! +(I-\Theta)W(t)(I-\Theta)W(t-1)+\dots  \\
\!\!& &\!\! +(I-\Theta)W(t)\dots(I-\Theta)W(2)\Vert_{\infty} \Vert \Theta \Vert_{\infty} \\
\!\!&\leq &\!\! [\Vert I \Vert_{\infty}+\Vert (I-\Theta)W(t)\Vert_{\infty} +\\
\!\!& &\!\! +\Vert (I-\Theta)W(t)(I-\Theta)W(t-1)\Vert_{\infty}+\dots  \\
\!\!& &\!\! + \Vert (I-\Theta)W(t)\dots(I-\Theta)W(2)\Vert_{\infty}] \Vert \Theta\Vert_{\infty} \\
\!\!&\leq &\!\! \left[1+\alpha+\alpha^2+\dots+\alpha^{t-1}\right] \Vert \Theta\Vert_{\infty} \\
\!\!&=&\!\! \frac{1-\alpha^t}{1-\alpha} \Vert \Theta\Vert_{\infty}.
\end{eqnarray*}
Therefore, for every $t\ge 1$, 
$$\Vert M(t) \Vert_{\infty} \le \alpha +  \Vert \Theta\Vert_{\infty}.$$
This  shows that $M(t)$ is bounded $\forall t \geq 0$.  \medskip

To prove that there exists  $M_\infty = \lim_{t\to +\infty} M(t)$, we observe that 
$$\lim_{t \to +\infty} \Vert M_u(t)\Vert_\infty \le \lim_{t \to +\infty} \alpha^t =0.$$ This ensures that 
$\lim_{t \to +\infty}  M_u(t)=\vect{0}_{n\times n}$ and hence $\lim_{t \to +\infty}  M(t) = \lim_{t \to +\infty}   M_f(t).$ 
We now observe that
\begin{equation*}\begin{split}
M_f(t) &= \Theta + \sum_{k=0}^{t-2} \Big[(I-\Theta)W(t)(I-\Theta)W(t-1)\dots  \\
& \ \ \  (I-\Theta)W(t-k)\Big] \Theta,
\end{split}\end{equation*}
and hence
\begin{equation*}\begin{split}
\lim_{t \to +\infty}   M_f(t) &= \Theta + \lim_{t \to +\infty} \sum_{k=0}^{t-2} \Big[(I-\Theta)W(t) \\ 
& \ \ \ (I-\Theta)W(t-1)\dots(I-\Theta)W(t-k)\Big]\Theta.
\end{split}\end{equation*}
So, we are remained with proving that the series in the previous expression converges.
It is well know   (see, e.g., Theorem 3.45 in \cite{math_analysis}, which   easily extends to series of matrices) that if the series of the norms converges, i.e., 
$\lim_{t \to +\infty} \sum_{k=0}^{t-2} \Vert (I-\Theta)W(t)(I-\Theta)W(t-1)\dots  (I-\Theta)W(t-k)\Vert < +\infty$, then the series
$\lim_{t \to +\infty} \sum_{k=0}^{t-2}   (I-\Theta)W(t)(I-\Theta)W(t-1)\dots  (I-\Theta)W(t-k)$ converges, in turn.
But, as a result of the previous analysis, we can claim that 
\begin{equation*}\begin{split}
&  \lim_{t \to +\infty} \sum_{k=0}^{t-2} \Vert (I-\Theta)W(t)(I-\Theta)W(t-1)\dots  \\
& \ \ \ \ \ \ \ \ \ \ \ \ \ \ (I-\Theta)W(t-k)\Vert \\
&\le \lim_{t \to +\infty} \sum_{k=0}^{t-2} \alpha^{k+1} = \lim_{t \to +\infty} \alpha\frac{1-\alpha^{t-1}}{1-\alpha} = \frac{\alpha}{1-\alpha} < +\infty.
\end{split}\end{equation*}
Finally, we have 
\begin{equation*}\begin{split}
\lim_{t\to\infty}{M(t+1)} &=  \lim_{t\to\infty}{\Big[(I-\Theta)\frac{1}{n}{\rm sgn}\left(M(t) S_0M(t) ^\top\right) \cdot} \\
& \ \ \ \ \ \ \ \ \ \ \ M(t)+\Theta\Big],
\end{split}\end{equation*}
which implies that $M_\infty$ satisfies \eqref{asymptM}.
\\ This completes the proof.  
\end{proof}
\smallskip

 The main consequence of Theorem \ref{M_bounded} is that for every $Y(0)\in {\mathbb R}^{n\times m}$ 
$$Y_\infty := \lim_{t\to +\infty} Y(t)$$
exists and coincides with $M_\infty Y(0)$. Therefore, the proposed extended version of the FJ model asymptotically converges to a constant 
solution. 
Moreover, there exists 
$W_\infty := \lim_{t\to +\infty} W(t+1)$ and  
\be
W_\infty \!=\! \frac{1}{n}{\rm sgn}\left(M_\infty S_0M_\infty ^\top\right)\in \! \left\{-\frac{1}{n},0,\frac{1}{n}\right\}^{n \times n} \!.
\label{finally}
\ee
 As a consequence of    Assumption 2,  the diagonal elements of $W_\infty$ are all positive and thus equal to $\frac{1}{n}$. 
\medskip

\begin{lemma}\label{Winf_diagonal}
For every $Y(0)\in {\mathbb R}^{n\times m}$ we have $$[W_\infty]_{ii}=\frac{1}{n}, \ \ \ \forall i\in[1,n],$$ and 
either $W_\infty$ has an eigenvalue in $1$ (and if so, ${\mathcal G}(W_\infty)$  is  structurally balanced) or is Schur stable.
\end{lemma}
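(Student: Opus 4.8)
The plan is to establish the two assertions separately, first the value of the diagonal entries and then the spectral dichotomy, exploiting throughout that $W_\infty$ is symmetric with entries in $\{-1/n,0,1/n\}$ and that, since $M_\infty S_0 M_\infty^\top = M_\infty Y(0)Y(0)^\top M_\infty^\top = Y_\infty Y_\infty^\top$, we may write $W_\infty=\frac{1}{n}{\rm sgn}(Y_\infty Y_\infty^\top)$.

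For the diagonal, I would note that $[W_\infty]_{ii}=\frac{1}{n}{\rm sgn}\big(\|y_i\|^2\big)$, where $y_i$ denotes the $i$-th row of $Y_\infty$, so the claim reduces to showing $y_i\neq 0$ for every $i$. First I would pass to the limit in the opinion recursion \eqref{opinion_dyn} to get the fixed-point identity $Y_\infty=(I_n-\Theta)W_\infty Y_\infty+\Theta Y(0)$, whose $i$-th row reads $y_i=(1-\theta_i)\sum_j[W_\infty]_{ij}y_j+\theta_i y_i^0$, with $y_i^0$ the $i$-th row of $Y(0)$. Then I would argue by contradiction: if $y_i=0$, all inner products $\langle y_i,y_j\rangle$ vanish, so the whole $i$-th row of $W_\infty$ is zero, the summation term drops out, and the identity collapses to $0=\theta_i y_i^0$; since $\theta_i>0$ by Assumption 1 and $y_i^0\neq 0$ by Assumption 2, this is impossible. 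Hence $y_i\neq 0$ and $[W_\infty]_{ii}=\frac{1}{n}$.

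For the spectral part, symmetry gives a real spectrum, and $\rho(W_\infty)\le\|W_\infty\|_\infty\le 1$ because every row of $W_\infty$ has at most $n$ entries, each of modulus $1/n$. The entire dichotomy then follows from a single claim: every eigenvalue $\lambda$ of $W_\infty$ with $|\lambda|=1$ must in fact equal $+1$, and its presence forces structural balance. Granting this, if no unit-modulus eigenvalue exists then $\rho(W_\infty)<1$ and $W_\infty$ is Schur stable, while otherwise $1\in\sigma(W_\infty)$ and $\mathcal G(W_\infty)$ is structurally balanced; ruling out a lone eigenvalue at $-1$ is exactly what makes the two alternatives exhaustive. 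To prove the claim I would run an extremal argument on a real eigenvector $v$ with $W_\infty v=\lambda v$, $|\lambda|=1$. Writing $A:=nW_\infty$ and $c:=\|v\|_\infty$, I would pick an index $i$ attaining $|v_i|=c$ and chase the chain $c=|\lambda|\,|v_i|=\frac1n|\sum_j A_{ij}v_j|\le\frac1n\sum_j|A_{ij}|\,|v_j|\le\frac1n(\sum_j|A_{ij}|)c\le c$. Equality throughout forces (i) $\sum_j|A_{ij}|=n$, so row $i$ has no zero entry; (ii) $|v_j|=c$ for all $j$, so $v=c\epsilon$ with $\epsilon\in\{-1,1\}^n$ and every index is extremal; and (iii), by the equality case of the triangle inequality, all terms $A_{ij}v_j$ share the sign of $\lambda v_i$, which (running the same argument at every $i$) yields $A_{ij}={\rm sgn}(\lambda)\epsilon_i\epsilon_j$ for all $i,j$. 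Evaluating at $j=i$ and invoking the already-established $A_{ii}=1$ gives ${\rm sgn}(\lambda)=1$, hence $\lambda=1$, and $A=\epsilon\epsilon^\top$; with $D={\rm diag}(\epsilon)$ one has $DAD=\vect{1}_n\vect{1}_n^\top\ge 0$ entrywise, so the partition of $[1,n]$ induced by the signs of $\epsilon$ certifies structural balance of $\mathcal G(W_\infty)$.

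The main obstacle is precisely this claim, i.e. the extremal/equality analysis: the routine bound $\rho(W_\infty)\le 1$ says nothing about which unit-modulus eigenvalues actually occur, and the delicate point is to squeeze out of a single tight inequality both the rigidity $v=c\epsilon$ and the sign pattern $A_{ij}=\epsilon_i\epsilon_j$, and then to use the diagonal value $A_{ii}=1$ to simultaneously exclude $\lambda=-1$ and recover balance. This is also where the two parts of the statement interlock: the diagonal computation is not merely a separate remark but the very ingredient that pins the unit-modulus eigenvalue at $+1$. Everything else, namely the norm bound and the passage to the limit in the recursion, is routine.
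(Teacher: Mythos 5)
Your proof is correct, but it takes a genuinely different route from the paper on both halves of the lemma. For the diagonal entries, the paper argues through the transition matrix: assuming $[W_\infty]_{ii}=0$ on an index set, it block-partitions $W_\infty$, $M_\infty$, $\Theta$ and $S_0$, uses the fixed-point equation \eqref{asymptM} together with an auxiliary Schur-stability argument to force $M_{21}$ to vanish, and only then extracts the contradiction from the block $\frac{1}{n}{\rm sgn}(\Theta_1 S_{11}\Theta_1)$ via Assumptions 1 and 2. You instead pass to the limit directly in the opinion recursion \eqref{opinion_dyn} and observe that a zero row $y_i$ of $Y_\infty$ zeroes out the $i$-th row of $W_\infty$, collapsing the fixed-point identity to $0=\theta_i y_i^0$ --- a one-step contradiction that bypasses the block partition and the stability sublemma entirely; note that both routes lean equally on the paper's asserted identity \eqref{finally}, i.e.\ $W_\infty=\frac{1}{n}{\rm sgn}(Y_\infty Y_\infty^\top)$, so you are on the same footing there. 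For the spectral dichotomy, the paper simply outsources to Lemma 17 of \cite{EJC2022}, whereas you prove it inline by an equality-case analysis of the infinity norm at a unit-modulus eigenvector of the symmetric matrix $W_\infty$; your rigidity chain is sound (tightness forces $\sum_j|A_{ij}|=n$ at an extremal row, hence $|v_j|=c$ for all $j$, hence $A_{ij}={\rm sgn}(\lambda)\,\epsilon_i\epsilon_j$ at every row, and the already-established $A_{ii}=1$ excludes $\lambda=-1$), and it in fact delivers strictly more than the lemma states: a unit eigenvalue forces $nW_\infty=\epsilon\epsilon^\top$, so $W_\infty$ then has no zero entries and the balanced partition is realized by a complete all-$\pm 1$ sign pattern, with $1$ a simple eigenvalue and all others at $0$. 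In short, your first part is shorter and more elementary than the paper's, and your second part makes the paper self-contained where it currently cites an external result; what the paper's longer first part buys as a by-product is the explicit block structure of the hypothetical degenerate equilibrium $M_\infty$, which your shortcut does not expose but which the lemma itself does not need.
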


\begin{proof}
First of all, it follows from \eqref{appraisal_dyn} that 
\be
W_\infty = \frac{1}{n} {\rm sgn}\left(Y_\infty Y_\infty^\top\right).
\ee Therefore, for every $i\in[1,n]$, we have that $n[W_\infty]_{ii} = {\rm sgn}(\vect{e}_i^\top Y_\infty Y_\infty^\top \vect{e}_i)={\rm sgn}(\Vert Y_\infty^\top \vect{e}_i\Vert^2)$, which can be either equal to $0$ or equal to $1$. 
We suppose, by contradiction, that $\exists i \in[1,n]$ s.t. $[W_\infty]_{ii}=0=n[W_\infty]_{ii}$ and we assume, without loss of generality, that 
$${\mathcal I}:= \left\{i\in[1,n] : [W_\infty]_{ii}=0\right\} = [1,n-k], \ \exists k\in[1,n-1].$$
Thus, for every $i \in[1,n-k]$, we have that $\Vert Y_\infty^\top \vect{e}_i\Vert^2=0$ and this is true if and only if $Y_\infty^\top \vect{e}_i = \vect{0}_n$. Therefore, we also have that $\vect{e}_i^\top Y_\infty Y_\infty^\top \vect{e}_j = 0,\ \forall j \in[1,n]$. It follows that $W_\infty$ can be block-partitioned in this way:
$$ W_\infty = \left[\begin{array}{c|c}
\vect{0}_{(n-k)\times (n-k)} & \vect{0}_{(n-k)\times k}  \\
\hline
\vect{0}_{k\times (n-k)} & W_{nz}
\end{array}\right],
$$
where $W_{nz}\in\left\{-\frac{1}{n},0,\frac{1}{n}\right\}^{k\times k}$ and $[W_{nz}]_{ii} = \frac{1}{n}, \ \forall i \in [1,k]$.
Now, we accordingly partition also the matrices $M_\infty$ and $\Theta$, obtaining 
$$ M_\infty = \left[\begin{array}{c|c}
M_{11} & M_{12} \\
\hline
M_{21}&M_{22}
\end{array}\right], \ \ \ 
\Theta = \left[\begin{array}{c|c}
\Theta_1 &  \\
\hline
&\Theta_2
\end{array}\right].
$$
From  \eqref{asymptM} and \eqref{finally}, it follows that 
\begin{eqnarray*}
\left[\begin{array}{c|c}
M_{11} & M_{12} \\
\hline
M_{21}&M_{22}
\end{array}\right] 
\!\!\!\!\!\!&=&\!\!\!\!\!\! \left[\begin{array}{c|c}
\Theta_1 &  \vect{0}_{(n-k)\times k} \\
\hline
(I_k-\Theta_2)W_{nz}M_{21}&(I_k-\Theta_2)W_{nz}M_{22}
\end{array}\right].
\end{eqnarray*}
This is equivalent to
$$
\begin{cases}
M_{11} = \Theta_1 \\
M_{12} = \vect{0}_{(n-k)\times k} \\
M_{21} = (I_k-\Theta_2)W_{nz}M_{21} \\
M_{22} = (I_k-\Theta_2)W_{nz}M_{22}+\Theta_2
\end{cases}
$$
and, in particular, the third equation implies that 
$$\left[(I_k-\Theta_2)W_{nz}\right]M_{21}\vect{e}_i = M_{21}\vect{e}_i, \ \forall i \in[1,n-k],$$
which means that $M_{21}\vect{e}_i$ is an eigenvector of $(I_k-\Theta_2)W_{nz}$ corresponding  to the eigenvalue $\lambda = 1$. On the other hand, it can be easily shown that $(I_k-\Theta_2)W_{nz}$ is Schur stable and so $M_{21}\vect{e}_i$ must be equal to $\vect{0}_k$. The same holds for every $i \in[1,n-k]$, leading to $M_{21} = \vect{0}_{k\times (n-k)}$.
This means that $M_\infty$ has the following block diagonal structure
$$M_\infty = \left[\begin{array}{c|c}
\Theta_1 &  \vect{0}_{(n-k)\times k} \\
\hline
 \vect{0}_{k\times (n-k)}&(I_k-\Theta_2)W_{nz}M_{22}
\end{array}\right].
$$ 
Moreover, if we partition $S_0$ as $
 \left[\begin{array}{c|c}
S_{11} & S_{12} \\
\hline
S_{12}^\top&S_{22}
\end{array}\right] 
$,  equation \eqref{finally} becomes
\begin{eqnarray*}
W_{\infty} \!\!\!\!\!&=&\!\!\!\!\! \left[\begin{array}{c|c}
\vect{0}_{(n-k)\times (n-k)} & \vect{0}_{(n-k)\times k}  \\
\hline
\vect{0}_{k\times (n-k)} & W_{nz}
\end{array}\right] \\
\!\!\!\!\!&=&\!\!\!\!\! \frac{1}{n} {\rm sgn} \left(
\left[\begin{array}{c|c}
\Theta_1 &  \\
\hline
&M_{22}
\end{array}\right]
\left[\begin{array}{c|c}
S_{11} & S_{12} \\
\hline
S_{12}^\top&S_{22}
\end{array}\right]
\left[\begin{array}{c|c}
\Theta_1 &  \\
\hline
&M_{22}^\top 
\end{array}\right] \right) \\
\!\!\!\!\!&=&\!\!\!\!\! \frac{1}{n} {\rm sgn} \left(
\left[\begin{array}{c|c}
\Theta_1 S_{11} \Theta_1 & \Theta_1 S_{12}M_{22}^\top \\
\hline
M_{22} S_{12}^\top \Theta_1 & M_{22}S_{22}M_{22}^\top
\end{array}\right] \right).
\end{eqnarray*}
But, the first diagonal block $\Theta_1S_{11}\Theta_1$ cannot be zero since this would imply (by Assumption 1)   $S_{11}=\vect{0}_{(n-k)\times (n-k)}$, which is not possible since $Y(0)$ has no zero rows by Assumption 2. \\
Therefore, we can conclude that $[W_\infty]_{ii} = \frac{1}{n}, \ \forall i \in[1,n]$. 
Finally, applying  Lemma 17 in \cite{EJC2022}, we can show that either $W_\infty$ has an eigenvalue in $1$ (and if so, ${\mathcal G}(W_\infty)$  is  structurally balanced) or is Schur stable.
\end{proof}

 It is possible to prove that
if $W_\infty$ has no zero entries, then  the influence matrix converges to its limit value $W_\infty$  in a finite number of steps. 
\medskip

\begin{proposition}\label{finite_time_Wconv}
Assume that  $W_\infty = \lim_{t\to +\infty}{W(t)}$ is devoid of zero entries.
Then $$\exists \ T\geq 0 \ \text{s.t.}   \ W(t)=W_\infty, \ \forall t \geq T.$$
\end{proposition}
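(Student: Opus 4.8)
The plan is to leverage the fact that the sign of a nonzero real number is stable under small perturbations, together with the entrywise convergence $M(t)\to M_\infty$ guaranteed by Theorem \ref{M_bounded}. Since each entry of $W(t+1)$ is obtained by applying ${\rm sgn}$ (up to the factor $1/n$) to the corresponding entry of $M(t)S_0M(t)^\top$, once these entries have settled near their nonzero limits their signs can no longer change.

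Concretely, I would set $A(t):=M(t)S_0M(t)^\top$ and $A_\infty:=M_\infty S_0M_\infty^\top$. Because matrix multiplication is continuous and $M(t)\to M_\infty$, we have $[A(t)]_{ij}\to[A_\infty]_{ij}$ for every pair $(i,j)$. The hypothesis that $W_\infty$ is devoid of zero entries means, by \eqref{finally}, that $[W_\infty]_{ij}=\frac{1}{n}{\rm sgn}([A_\infty]_{ij})\neq 0$, i.e.\ $[A_\infty]_{ij}\neq 0$ for all $i,j$. Fixing a pair $(i,j)$ and letting $\eps_{ij}:=|[A_\infty]_{ij}|>0$, the convergence of $[A(t)]_{ij}$ yields an index $T_{ij}$ such that $|[A(t)]_{ij}-[A_\infty]_{ij}|<\eps_{ij}$ for all $t\ge T_{ij}$; this strict inequality keeps $[A(t)]_{ij}$ on the same side of zero as $[A_\infty]_{ij}$, so that ${\rm sgn}([A(t)]_{ij})={\rm sgn}([A_\infty]_{ij})$ for every $t\ge T_{ij}$.

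To conclude, since there are only finitely many pairs $(i,j)\in[1,n]\times[1,n]$, I would take $T_0:=\max_{i,j}T_{ij}$, which is well defined. Then ${\rm sgn}(A(t))={\rm sgn}(A_\infty)$ for all $t\ge T_0$, and therefore, using \eqref{Wt1},
\[
W(t+1)=\frac{1}{n}{\rm sgn}(A(t))=\frac{1}{n}{\rm sgn}(A_\infty)=W_\infty,\qquad \forall\,t\ge T_0.
\]
Setting $T:=T_0+1$ then gives $W(t)=W_\infty$ for all $t\ge T$, as claimed. I do not expect any genuine analytic difficulty here; the only points requiring minor care are the one-step index shift between $W(t+1)$ and $A(t)$, and the observation that the finiteness of the index set is exactly what upgrades the per-entry stabilization into a single uniform time $T$ after which the whole matrix $W(t)$ is frozen at $W_\infty$. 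Note also that the no-zero-entries hypothesis is essential: if some $[A_\infty]_{ij}=0$, the corresponding entry of $A(t)$ could approach zero while oscillating in sign, so finite-time convergence of $W(t)$ could fail.
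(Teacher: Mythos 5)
Your proof is correct and follows essentially the same route as the paper's: the paper likewise fixes an arbitrary pair $(i,j)$ with $[W_\infty]_{ij}\neq 0$, notes that the corresponding entry of $Y(t)Y(t)^\top$ (which equals your $[A(t)]_{ij}=[M(t)S_0M(t)^\top]_{ij}$ by Proposition~\ref{trans_mat}) converges to a nonzero limit, and concludes that its sign is eventually frozen, so $[W(t+1)]_{ij}=[W_\infty]_{ij}$ for all large $t$. If anything, you are slightly more explicit than the paper on two minor points it leaves implicit --- taking the maximum of the finitely many per-entry times $T_{ij}$ and handling the one-step shift between $W(t+1)$ and $A(t)$ --- so no gap remains.
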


\begin{proof} Let $i$ and $j$ be arbitrary indices in $[1,n]$, 
and
assume, for instance, that $[W_\infty]_{ij} = 1/n$. This means that
$\lim_{t\to +\infty}{\rm sgn} ([Y(t)Y(t)^\top]_{ij}) =1$ and hence $\lim_{t\to +\infty} [Y(t)Y(t)^\top]_{ij}  = [Y_{\infty}]_{ij}> 0.$
This implies that   $\exists \ T\geq 0$ s.t. for every $t \ge T$ we have $[Y(t)Y(t)^\top]_{ij} >0$,  and hence 
$[W(t+1)]_{ij} = [W_\infty]_{ij} = 1/n$  for every $t \ge T$.  
\end{proof}

\medskip

  We now  explore some interesting properties
of  
$M_\infty$.

 \begin{proposition}\label{diag_max_and_pos} For every $Y(0)\in {\mathbb R}^{n\times m}$, the matrix
$M_\infty = \lim_{t\to +\infty}  M(t)$ is nonsingular and such that 
\medskip

\begin{enumerate} [label=(\roman*)]
\item $\|M_\infty \vect{e}_i\|_\infty = \max_{j\in[1,n]}{|[M_\infty]_{ji}|}=|[M_\infty]_{ii}|,  \forall i \in [1,n]$;
\item $[M_\infty]_{ii}>0, \ \ \forall i \in [1,n]$.
\end{enumerate}
 \end{proposition}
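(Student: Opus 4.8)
The plan is to extract everything from the fixed-point identity \eqref{asymptM}. Writing $W_\infty := \frac{1}{n}{\rm sgn}(M_\infty S_0 M_\infty^\top)$ and $P := (I_n-\Theta)W_\infty$, equation \eqref{asymptM} reads $M_\infty = P M_\infty + \Theta$, equivalently $(I_n - P)M_\infty = \Theta$. First I would reuse the contraction estimate already exploited in the proof of Theorem \ref{M_bounded}: since $W_\infty \in \{-\frac1n,0,\frac1n\}^{n\times n}$ by \eqref{finally}, every row of $W_\infty$ has $\ell_1$-norm at most $1$, so
$$\Vert P\Vert_\infty = \max_{j\in[1,n]}(1-\theta_j)\sum_{k}|[W_\infty]_{jk}| \le \max_{j\in[1,n]} (1-\theta_j) =: \alpha < 1.$$
Hence $I_n - P$ is invertible and $M_\infty = (I_n-P)^{-1}\Theta$; as $\Theta$ is invertible under Assumption 1, $M_\infty$ is nonsingular, which settles the first claim and, in particular, guarantees that no column of $M_\infty$ vanishes.

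For the two itemized statements I would argue column by column. Fix $i$, set $m_i := M_\infty \vect{e}_i$ and $c_i := \Vert m_i\Vert_\infty$; by nonsingularity $c_i>0$. The $i$-th column of $M_\infty = P M_\infty + \Theta$ reads $m_i = P m_i + \theta_i \vect{e}_i$. For every $j\ne i$ the prejudice term vanishes, so $[m_i]_j = [P m_i]_j$ and therefore $|[m_i]_j| \le \Vert P\Vert_\infty\, c_i \le \alpha c_i < c_i$. Thus the maximum $c_i$ cannot be attained off the diagonal, forcing $|[M_\infty]_{ii}| = c_i = \Vert M_\infty\vect{e}_i\Vert_\infty$, which is exactly item (i).

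For item (ii) I would feed (i) into a sign argument on the diagonal entry. The $i$-th coordinate of the column equation gives $[m_i]_i = \theta_i + [P m_i]_i$, while the row estimate yields $|[P m_i]_i| \le (1-\theta_i)\sum_k |[W_\infty]_{ik}|\,|[m_i]_k| \le (1-\theta_i)c_i = (1-\theta_i)|[m_i]_i|$, using item (i) in the last equality. If $[m_i]_i \le 0$ then $[P m_i]_i = [m_i]_i - \theta_i \le -\theta_i < 0$, so $|[P m_i]_i| = \theta_i + |[m_i]_i| > (1-\theta_i)|[m_i]_i|$, contradicting the bound; hence $[M_\infty]_{ii}=[m_i]_i > 0$.

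The main obstacle is precisely this final step. Items (i) and nonsingularity are essentially immediate from the strict contraction $\Vert P\Vert_\infty<1$, but the strict positivity in (ii) is delicate: it relies on first having established (i), which is what lets the influence cross-terms in $[P m_i]_i$ be dominated by $|[m_i]_i|$, and it succeeds only because the additive prejudice contribution $\theta_i>0$ can never be cancelled by the strictly contractive influence term. I expect no issues elsewhere.
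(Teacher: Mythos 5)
Your proof is correct, and it rests on the same engine as the paper's --- the fixed-point identity \eqref{asymptM} combined with the strict contraction bound $\Vert (I_n-\Theta)W_\infty\Vert_\infty \le \alpha < 1$ --- but the execution differs in ways worth noting. For nonsingularity the paper argues by contradiction on a kernel vector (from $M_\infty v = \vect{0}_n$ the identity forces $\Theta v = \vect{0}_n$, impossible under Assumption 1), while you invert $I_n - P$ via the Neumann series and write $M_\infty = (I_n-P)^{-1}\Theta$; both are immediate. For item (i) the paper permutes the entries of the column $M_\infty\vect{e}_i$ into decreasing order of modulus and shows the prejudice term can only sit in the top row; your observation that every off-diagonal entry obeys the pure contraction equation $[m_i]_j = [Pm_i]_j$, hence $|[m_i]_j| \le \alpha c_i < c_i$, reaches the same conclusion without the permutation machinery (and you correctly supply the needed $c_i>0$ from nonsingularity, without which the strict inequality would fail). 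The most substantive difference is in item (ii): the paper splits off the diagonal term of $W_\infty$ and uses $[W_\infty]_{ii} = \frac{1}{n}$, i.e., it invokes Lemma \ref{Winf_diagonal}, whose proof in turn relies on Assumption 2; your contradiction uses only $|[W_\infty]_{ik}| \le \frac{1}{n}$ for all $k$ (which holds by \eqref{finally} unconditionally) together with the exact evaluation $|[Pm_i]_i| = \theta_i + |[m_i]_i|$ under the hypothesis $[m_i]_i \le 0$, so part (ii) in your version is independent of Lemma \ref{Winf_diagonal} and of Assumption 2 --- a mild but genuine gain in economy and generality. What each approach buys: the paper's version makes explicit where the positivity of the diagonal of $W_\infty$ enters the analysis, which is thematically useful since that lemma is developed anyway; yours is shorter and isolates the single mechanism driving both items, namely that the prejudice term $\theta_i\vect{e}_i$ lives only in row $i$ and can never be cancelled by the strictly contractive influence term.
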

 
 \begin{proof} We first  prove that $M_\infty$ is nonsingular. Suppose, by contradiction, that $v\in\mathbb{R}^n, v\ne \vect{0}_n,$ belongs to the kernel of $M_\infty$, i.e., $M_\infty v=\vect{0}_n$. Then, by making use of \eqref{asymptM} and \eqref{finally}, we obtain
$$\vect{0}_n=M_\infty v=(I-\Theta)W_\infty M_\infty v + \Theta v \ \Rightarrow \ \Theta v = \vect{0}_n,$$
which is not possible since each $\theta_i \in (0,1)$, by  Assumption 1. 

{\em (i)} 
 Let $i$ be any index  in $[1,n]$.
Then
 $$M_\infty \vect{e}_i =(I-\Theta)W_\infty M_\infty \vect{e}_i+\Theta \vect{e}_i.$$
 If we permute the entries of $M_\infty \vect{e}_i$, using an $n \times n$ permutation matrix $P$, in such a way that $$\tilde{v} :=P^\top M_\infty \vect{e}_i=\begin{bmatrix}
\tilde v_1 \\
 \vdots \\ 
 \tilde v_n
 \end{bmatrix},
 \ \ \text{with } |\tilde v_1| \geq |\tilde v_2| \geq \dots \geq |\tilde v_n|,$$ 
 we obtain 
 \begin{eqnarray*}
 \tilde{v}=P^\top M_\infty \vect{e}_i  &=& P^\top(I-\Theta)PP^\top W_\infty PP^\top M_\infty \vect{e}_i + \\
 &+& P^\top \Theta PP^\top \vect{e}_i \\
 &=& (I-\tilde{\Theta})\tilde{W} _\infty \tilde{v} + \tilde{\Theta} \vect{e}_j,  \qquad  \exists j\in [1,n],
 \end{eqnarray*}
 where $\tilde{W} _\infty = P^\top W_\infty P$ and $$\tilde{\Theta} = P^\top\Theta P = \begin{bmatrix}
 \tilde{\theta} _1 & & \\
 &\ddots & \\
 & & \tilde{\theta} _n
 \end{bmatrix}.$$  
 By looking at the first component of $\tilde{v}$, i.e., $\tilde v_1$, we have
 $$\tilde v_1 = (1-\tilde{\theta} _1) \vect{e}_1 ^\top \tilde{W} _\infty \begin{bmatrix}
\tilde v_1 \\ 
 \vdots \\ 
\tilde v_n
 \end{bmatrix}+\tilde{\theta} _1 \vect{e}_1^\top \vect{e}_j,$$
 which implies that 
 \begin{equation}
 |\tilde v_1|\leq  (1-\tilde{\theta} _1) \sum_{i=1}^{n}{\frac{|\tilde v_i|}{n}}+\tilde{\theta} _1 \vect{e}_1^\top \vect{e}_j.
 \label{useful}
 \end{equation}
 Therefore, if $j\ne 1$, the right-hand side of \eqref{useful} would be
 $$(1-\tilde{\theta} _1) \sum_{i=1}^{n}{\frac{|\tilde v_i|}{n}} < \sum_{i=1}^{n}{\frac{|\tilde v_i|}{n}}\leq |\tilde v_1|,$$
 a contradiction. Thus, it must be $j=1$ and $\tilde{\theta} _1 = \theta _i$. 
 So, we have $\tilde v_1=\left[M_\infty \vect{e}_i\right]_i = [M_\infty]_{ii}$. This means that  $\max_{j\in[1,n]}{|[M_\infty]_{ji}|}=|[M_\infty]_{ii}|$. 
 Clearly, this is true for every index $i \in [1,n]$, namely for every column of $M_\infty$.  
 
{\em (ii)} We want to prove that $[M_\infty]_{ii}>0,  \forall i \in [1,n]$, which is equivalent to showing that $\tilde v_1>0$, by referring to the notation adopted in  part {\em (i)}.  Suppose, by contradiction, that $\tilde v_1\leq 0$. Then, using the fact that 
 $[\tilde W_\infty]_{11}=\frac{1}{n}$,  we get
 \begin{eqnarray*}
\tilde  v_1 &=& (1-\tilde{\theta} _1) \vect{e}_1 ^\top \tilde{W} _\infty \begin{bmatrix}
 \tilde v_1 \\ 
 \vdots \\ 
\tilde  v_n
 \end{bmatrix}+\tilde{\theta} _1 \\
 &=& (1-\tilde{\theta} _1) \frac{1}{n}\tilde  v_1 +  (1-\tilde{\theta} _1)  \sum_{j \ne 1}{[\tilde{W} _\infty] _{1j}\tilde  v_j} +\tilde{\theta} _1.
 \end{eqnarray*}
Consequently,
$$\left(1-\frac{1-\tilde{\theta} _1}{n}\right)\tilde  v_1 -\tilde{\theta} _1 =  (1-\tilde{\theta} _1)   \sum_{j \ne 1}{[\tilde{W} _\infty]_{1j}\tilde  v_j}.$$
If $\tilde v_1\leq 0$, we get 
\begin{eqnarray*}
  \left(1-\frac{1-\tilde{\theta} _1}{n}\right)|\tilde v_1| +\tilde{\theta} _1 
&=& \left| \left(1-\frac{1-\tilde{\theta} _1}{n}\right)\tilde v_1 -\tilde{\theta} _1 \right| \\
&=& (1-\tilde{\theta} _1)   \left| \sum_{j \ne 1}{[\tilde{W} _\infty]_{1j}\tilde v_j} \right| \\
&\leq & (1-\tilde{\theta} _1)\frac{n-1}{n}|\tilde v_1|,
\end{eqnarray*}
which implies that 
$$ \left(1-\frac{1-\tilde{\theta} _1}{n}-\frac{1-\tilde{\theta} _1}{n}(n-1)\right)|\tilde v_1|+\tilde{\theta} _1 \leq 0 $$
$$\Rightarrow \ \ \left(1-(1-\tilde{\theta} _1)\right)|\tilde v_1|+\tilde{\theta} _1 \leq 0  \ \ \Rightarrow \ \ \tilde{\theta} _1|v_1|+\tilde{\theta} _1\leq 0,$$
a contradiction. \\
Therefore,  $\tilde v_1$ must be positive, 
 which is equivalent to saying that  $[M_\infty]_{ii}>0, \forall i \in [1,n]$, as we wanted to show. 
 \end{proof}
 \smallskip

 The previous result means  that the initial opinion of each agent impacts  more on his/her own final opinion than on
 the final opinions of the other agents. In other words, the agent that weights more agent $i$'s initial opinion is agent $i$ himself/herself. 
 Moreover, (and not unexpectedly!) such impact is always   positive.
  \medskip

\section{Single-topic case} \label{single_topic}

We now address  the case where $m=1$, namely there is only one discussion topic. When so, the opinion matrix is a column vector, that we now denote by $y(t)\in\mathbb{R}^n$, containing the opinions of the agents on the topic.
It is easy to see that if we define
$$v(t) := {\rm sgn}(y(t)),$$
   the influence matrix becomes 
\begin{displaymath}
W(t+1)=\frac{1}{n}{\rm sgn}\left(y(t)y(t)^{\top}\right) = \frac{1}{n} v(t)v(t)^{\top}.
\end{displaymath}
Consequently,
model \eqref{opinion_dyn}-\eqref{appraisal_dyn}
 becomes: 
\begin{eqnarray}
\label{op_dyn_1}
y(t+1) &=& (I-\Theta)W(t+1)y(t)+\Theta y(0) \\
\label{app_dyn_1}
W(t+1)&=&\frac{1}{n}{\rm sgn}\left(y(t)y(t)^{\top}\right) = \frac{1}{n}v(t)v(t)^{\top},
 \end{eqnarray}
leading to the  difference equation:
\begin{equation}
\label{op_dyn_ST}
y(t+1) = \frac{1}{n}(I-\Theta)v(t)v(t)^{\top}y(t)+\Theta y(0).
\end{equation}
 We also note that in this context Assumption 2  amounts to imposing that $y(0)$ is devoid of zero entries. 
 In fact, condition $y_i(0) = 0$ would lead the $i$-th agent to remain isolated and stick to the zero opinion.
\smallskip

Under the previous hypotheses, we can  derive the following results.
\medskip

\begin{lemma}
\label{W_evolution}
For $m=1$, the influence matrix 
 remains constant 
 and, specifically, 
$$W(t+1)=W(1) 
=\frac{1}{n}v(0)v(0)^{\top}, \quad \forall t\geq 1.$$
\end{lemma}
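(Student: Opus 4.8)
The plan is to reduce the claim to showing that the sign vector never changes, i.e. $v(t)=v(0)$ for every $t\ge 0$, which I would establish by induction on $t$. Once this is proved, $v(t)v(t)^\top = v(0)v(0)^\top$ for all $t$, and \eqref{app_dyn_1} immediately gives $W(t+1)=\frac{1}{n}v(0)v(0)^\top = W(1)$, which is exactly the assertion.

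For the induction, the base case $t=0$ holds by the definition $v(0)={\rm sgn}(y(0))$. For the inductive step, assume $v(t)=v(0)=:s$, where by Assumption 2 we have $s\in\{-1,1\}^n$. Under this hypothesis $s_i={\rm sgn}(y_i(t))$, so $s_iy_i(t)=|y_i(t)|$ for each $i$, and hence the scalar $c_t := v(t)^\top y(t) = s^\top y(t) = \sum_{i=1}^{n}|y_i(t)|$ is nonnegative. Substituting $v(t)=s$ into \eqref{op_dyn_ST} and using $v(t)v(t)^\top y(t)=s\,(s^\top y(t))=c_t\,s$, I obtain $y(t+1)=\frac{c_t}{n}(I-\Theta)s+\Theta y(0)$, whose $i$-th component is
\begin{equation*}
y_i(t+1) = \frac{c_t}{n}(1-\theta_i)s_i + \theta_i y_i(0).
\end{equation*}

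The crux is the positivity argument. Using $y_i(0)=s_i|y_i(0)|$ (valid because $s_i={\rm sgn}(y_i(0))$), I would factor out $s_i$ to get
\begin{equation*}
y_i(t+1) = s_i\left[\frac{c_t}{n}(1-\theta_i) + \theta_i|y_i(0)|\right].
\end{equation*}
The bracketed quantity is strictly positive: the first summand is nonnegative since $c_t\ge 0$ and $0<\theta_i<1$ by Assumption 1, while the second is strictly positive since $\theta_i>0$ by Assumption 1 and $y_i(0)\ne 0$ by Assumption 2. Hence ${\rm sgn}(y_i(t+1))=s_i$ for every $i$, i.e. $v(t+1)=s=v(0)$, which closes the induction.

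I expect the only delicate point to be ruling out that some component of $y(t+1)$ either vanishes or flips sign, since either event would break both the sign recursion and the constancy of $W$. This is secured precisely by the observation that the two driving terms carry the common sign $s_i$, with the stubbornness contribution $\theta_i|y_i(0)|$ being strictly positive; note that only $c_t\ge 0$ (not $c_t>0$) is needed, so the strict positivity rests entirely on the combination of Assumption 1 ($\theta_i>0$) and Assumption 2 ($y_i(0)\ne 0$).
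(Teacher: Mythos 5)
Your proof is correct and follows essentially the same route as the paper: an induction showing $v(t)=v(0)$ for all $t$, using the fact that the homophily term $\frac{c_t}{n}(I-\Theta)v(0)$ and the stubbornness term $\Theta y(0)$ share the sign pattern $v(0)$, with Assumptions 1 and 2 guaranteeing the strict positivity that pins down the sign. Your component-wise factorization $y_i(t+1)=s_i\bigl[\frac{c_t}{n}(1-\theta_i)+\theta_i|y_i(0)|\bigr]$ merely spells out explicitly (and slightly more carefully, noting that only $c_t\ge 0$ is needed) the step the paper's proof states tersely when it concludes ${\rm sgn}[\cdot]=v(0)$.
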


\begin{proof}
By induction on $t$.  For $t=1$, we have 
$$W(t+1)=W(2)=\frac{1}{n}{\rm sgn}(y(1)y^{\top}(1))=\frac{1}{n}v(1)v^{\top}(1).$$
But, 
\begin{eqnarray*}
v(1)&=&{\rm sgn}(y(1))\\
&=&{\rm sgn}\left[(I-\Theta)\frac{1}{n}v(0) v(0)^{\top}y(0)+\Theta y(0)\right] = v(0),
\end{eqnarray*}
where we exploited the fact that 
$v(0)^{\top}y(0) = {\rm sgn} (y(0))^\top y(0) > 0$ (Assumption 2 rules out the case $y(0)=\vect{0}_n$).
Suppose that the result holds for $t < \bar t$. For $t= \bar t$: 
\begin{eqnarray*}
v(\bar t+1) &=& {\rm sgn}(y(\bar t+1)) \\
&=&{\rm sgn}\left[(I-\Theta)\frac{1}{n}\overbrace{v(\bar t)}^{=v(0)}\underbrace{v(\bar t)^{\top}y(t)}_{\sum_{i=1}^{n}{|y_i(\bar t)|}>0}+\Theta y(0)\right] \\
&=& v(0).
 \end{eqnarray*}
Thus, $v(t+1)=v(0), \forall t\geq 0,$ 
yielding 
$W(t+1) =\frac{1}{n}v(t+1)v(t+1)^{\top}=\frac{1}{n}v(0)v(0)^{\top}=W(1), \forall t\geq 1$.
\end{proof} 
\smallskip

As a consequence of the previous lemma, for $m=1$
the model in \eqref{op_dyn_1}-\eqref{app_dyn_1} becomes time-invariant and the dynamics of $y(t)$ can be expressed as: 
\begin{equation}
\label{op_dyn_ST_Wconstant}
y(t+1) = \frac{1}{n}(I-\Theta)v(0)v(0)^{\top}y(t)+\Theta y(0).
\end{equation}

Lemma \ref{W_evolution} implies
that the whole opinion dynamics evolves at each time step 
with an influence matrix that corresponds to a situation
of  structural balance \cite{Altafini2013,Xia2016StructuralBA}, by this meaning that ${\mathcal G}(W(t+1))$ is structurally balanced for every $t\ge 0$.
%
We can now derive the following result.
\medskip

\begin{theorem}
\label{y_evolution} 
For $m=1$,  we have
\begin{eqnarray}
W_\infty &=& \frac{1}{n} v(0) v(0)^\top, \nonumber\\
 M_\infty  &=&
  \left[I_n +  \frac{1}{ \sum_{i=1}^n  \theta_i}   (I_n-\Theta)  v(0) v(0)^\top \right]\Theta.
  \label{minfm1}
\end{eqnarray}
\end{theorem}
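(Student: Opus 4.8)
The first identity is immediate: by Lemma~\ref{W_evolution} the sequence $\{W(t+1)\}_{t\ge 0}$ is constant and equal to $\frac{1}{n}v(0)v(0)^\top$, whence $W_\infty=\frac{1}{n}v(0)v(0)^\top$. The substance of the claim is the closed form for $M_\infty$, and the plan is to exploit the fact that for $m=1$ the dynamics has become time-invariant. Indeed, from \eqref{op_dyn_ST_Wconstant} the transition matrix obeys $M(t+1)=A\,M(t)+\Theta$ with $M(0)=I_n$, where $A:=\frac{1}{n}(I_n-\Theta)v(0)v(0)^\top$. By Theorem~\ref{M_bounded} the limit $M_\infty$ exists, and specialising \eqref{asymptM} to the present case (where $W_\infty$ is already known) shows that it satisfies the \emph{linear} fixed-point equation $M_\infty=A\,M_\infty+\Theta$, i.e. $(I_n-A)M_\infty=\Theta$.

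The key observation is that $A$ is a rank-one matrix, $A=c\,d^\top$ with $c:=\frac{1}{n}(I_n-\Theta)v(0)$ and $d:=v(0)$, so its only possibly nonzero eigenvalue is $d^\top c=\frac{1}{n}v(0)^\top(I_n-\Theta)v(0)$. Since Assumption~2 forces $v(0)\in\{-1,1\}^n$, we have $v_i(0)^2=1$ for every $i$, so that $v(0)^\top(I_n-\Theta)v(0)=\sum_{i=1}^n(1-\theta_i)=n-\sum_{i=1}^n\theta_i$, and the nonzero eigenvalue equals $1-\frac{1}{n}\sum_{i=1}^n\theta_i$. By Assumption~1 this quantity lies strictly in $(0,1)$, hence $\rho(A)<1$, the matrix $I_n-A$ is nonsingular, and the fixed point $M_\infty=(I_n-A)^{-1}\Theta$ is unique.

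To obtain the explicit form I would then invert $I_n-A$ by the Sherman--Morrison formula, since $A$ is a rank-one update of the zero matrix: $(I_n-c\,d^\top)^{-1}=I_n+\frac{c\,d^\top}{1-d^\top c}$. Using $1-d^\top c=\frac{1}{n}\sum_{i=1}^n\theta_i$ and $c\,d^\top=\frac{1}{n}(I_n-\Theta)v(0)v(0)^\top$, the $\frac{1}{n}$ factors cancel and I get $(I_n-A)^{-1}=I_n+\frac{1}{\sum_{i=1}^n\theta_i}(I_n-\Theta)v(0)v(0)^\top$; multiplying on the right by $\Theta$ yields exactly \eqref{minfm1}.

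The argument is essentially routine linear algebra once the rank-one structure is recognised. The only points requiring care are verifying $\rho(A)<1$ (equivalently $1-d^\top c\neq 0$), so that both the limit and the inverse are well defined, which is precisely where Assumptions~1 and~2 enter, and tracking the scalar $\frac{1}{n}$ factors through Sherman--Morrison; I anticipate no genuine obstacle. As an alternative to the derivation, one could simply substitute the candidate \eqref{minfm1} into $(I_n-A)M_\infty=\Theta$ and check the identity directly, then appeal to uniqueness of the fixed point guaranteed by $\rho(A)<1$.
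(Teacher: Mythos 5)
Your proposal is correct, and it follows the same skeleton as the paper's proof: both obtain $W_\infty=\frac{1}{n}v(0)v(0)^\top$ from Lemma~\ref{W_evolution}, both specialise \eqref{asymptM} and \eqref{finally} to the linear fixed-point equation $[I_n-(I_n-\Theta)W_\infty]M_\infty=\Theta$, and both solve it by inverting $I_n-(I_n-\Theta)W_\infty$. Where you diverge is in the two technical sub-steps. For invertibility, the paper applies the Gershgorin circle theorem, which only uses $[W_\infty]_{ii}=\frac{1}{n}$ and $|[W_\infty]_{ij}|\le\frac{1}{n}$ and would therefore work for \emph{any} admissible limit influence matrix; your argument instead exploits the rank-one structure $A=c\,d^\top$ to compute the single nonzero eigenvalue exactly, $d^\top c=1-\frac{1}{n}\sum_{i=1}^n\theta_i\in(0,1)$, which is sharper (it pins down $\rho(A)$ precisely rather than merely bounding it) but less portable. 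For the explicit inverse, the paper expands the Neumann series $I_n+\sum_{k\ge 1}[(I_n-\Theta)W_\infty]^k$ and sums the resulting geometric series using the identity $[(I_n-\Theta)\frac{1}{n}v(0)v(0)^\top]^k=[(I_n-\Theta)\frac{1}{n}v(0)v(0)^\top]\bigl(\frac{\sum_i(1-\theta_i)}{n}\bigr)^{k-1}$, i.e.\ exactly the scalar $d^\top c$ you identify; your Sherman--Morrison step $(I_n-c\,d^\top)^{-1}=I_n+\frac{c\,d^\top}{1-d^\top c}$ is the closed form of that same series, so the computations are isomorphic, with yours being slightly more economical and the paper's more self-contained (it re-derives the rank-one summation rather than citing a formula). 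Your closing remark that one could instead verify \eqref{minfm1} by direct substitution plus uniqueness of the fixed point is also sound, since $\rho(A)<1$ guarantees uniqueness. No gaps: your uses of Assumption~1 (for $d^\top c\in(0,1)$) and Assumption~2 (for $v(0)\in\{-1,1\}^n$, so $v(0)^\top(I_n-\Theta)v(0)=n-\sum_i\theta_i$) are exactly where these hypotheses are needed.
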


\begin{proof}
By Lemma \ref{W_evolution}, $W(t)= \frac{1}{n}v(0)v(0)^{\top},  \forall t\ge 0$. Therefore, we also have $W_\infty = \frac{1}{n} v(0) v(0)^\top$. \\
For what concerns $M_\infty$, it follows from \eqref{asymptM} and \eqref{finally} that 
$$ [I_n-(I_n-\Theta)W_\infty]M_\infty =  \Theta, $$
which can be rewritten as
$$M_\infty = [I_n-(I_n-\Theta)W_\infty]^{-1} \Theta,$$ 
since the matrix $ I_n-(I_n- \Theta)  W_\infty$ is invertible, as shown below. 
By Gershgorin Circles Theorem \cite{HornJohnson} (and    Assumption 1), the spectrum of $(I_n- \Theta)  W_\infty$ satisfies
\smallskip

$\sigma((I_n- \Theta)  W_\infty) \subseteq$
\begin{eqnarray*}
&\subseteq&
\bigcup_{i=1}^n \left\{s\in {\mathbb C}: \left| s - \frac{1-\theta_i}{n}\right| \le \frac{n-1}{n} (1-\theta_i)\right\}\\
&=& \bigcup_{i=1}^n \left\{s\in {\mathbb C}: - \frac{n-2}{n}(1-\theta_i) \le  s  \le 1-\theta_i\right\}\\
&\subsetneq& \{s\in {\mathbb C}: |s| < 1\},
\end{eqnarray*}
where we used the fact that $[W_\infty]_{ii} = \frac{1}{n}$ for every $i\in [1,n]$,  and $|[W_\infty]_{ij}| \le \frac{1}{n}$ for every $i\ne j$, with  $i,j \in [1,n]$.
This implies that $(I_n- \Theta)  W_\infty$ is Schur stable. Therefore, $\rho((I_n- \Theta)  W_\infty)<1$. 
This in turn implies that $I-(I_n- \Theta)  W_\infty$ is an invertible matrix.  Moreover, 
it holds that 
\begin{eqnarray*}
[I_n-(I_n-\Theta)W_\infty]^{-1} \!\!\!&=&\!\!\! I_n + \sum_{k=1}^{+\infty}{[(I_n-\Theta)W_\infty]^k} \\
\!\!\!&=&\!\!\! I_n + \sum_{k=1}^{+\infty}{[(I_n-\Theta)\frac{1}{n} v(0) v(0)^\top]^k} \\
\!\!\!&=&\!\!\! I_n + [(I_n-\Theta)\frac{1}{n} v(0) v(0)^\top] \cdot\\
\!\!\!&&\!\!\! \ \ \ \  \ \ \ \sum_{k=1}^{+\infty}{\left( \frac{\sum_{i=1}^{n}{(1-\theta_i)}}{n}\right)^{k-1}} \\
\!\!\!&=&\!\!\! I_n +  \frac{1}{ \sum_{i=1}^n  \theta_i}   (I_n-\Theta)  v(0) v(0)^\top. 
\end{eqnarray*}
Thus,  $M_\infty$ is expressed as in \eqref{minfm1}.
\end{proof}

To conclude, we can provide an explicit expression for the agents' asymptotic opinions, namely 
$$
y_\infty = \left[I_n +  \frac{1}{ \sum_{i=1}^n  \theta_i}   (I_n-\Theta)  v(0) v(0)^\top \right]\Theta y(0).
$$

\section{Examples}\label{examples}


{\bf Example 1.} \ We consider a group of $n= 6$ agents discussing $m=6$ topics.
We assume that $\theta_1=\theta_6= \frac{2}{3}, \theta_2=\theta_5=	\frac{1}{2}, \theta_3=	\theta_4=\frac{1}{3}$ and that $Y(0)$ is: 
\begin{footnotesize}
$$Y(0)=\begin{bmatrix} 
-0.1317 \!\!&\!\!  1.7035 \!\!&\!\!  -0.2350  \!\!&\!\!  0.0802  \!\!&\!\!  0.7824  \!\!&\!\! -0.6380\cr
   0.2968  \!\!&\!\! -0.6272  \!\!&\!\!  0.9015 \!\!&\!\!  -0.4425 \!\!&\!\!  -0.1206 \!\!&\!\!  -0.7040\cr
  -0.6075 \!\!&\!\!  -0.3453 \!\!&\!\!   0.3935 \!\!&\!\!  -0.9496  \!\!&\!\!  0.5671  \!\!&\!\! -0.3654\cr
   0.5217  \!\!&\!\! -0.2691  \!\!&\!\! -0.2884  \!\!&\!\! -0.1193 \!\!&\!\!  -0.3721 \!\!&\!\!  -1.1914\cr
   0.0244  \!\!&\!\! -0.2168  \!\!&\!\! -0.2278  \!\!&\!\!  1.1211 \!\!&\!\!  -0.3104  \!\!&\!\! -0.7398\cr
  -0.3392  \!\!&\!\!  0.7993  \!\!&\!\!  0.1429  \!\!&\!\! -0.9816 \!\!&\!\!  -1.4906  \!\!&\!\!  0.2002\end{bmatrix}.$$
  \end{footnotesize}
The evolutions of the opinions on the $6$ topics as well as the evolution of the influence matrix are illustrated in Figure \ref{figura1}.

\begin{figure}

 \begin{center}
         \includegraphics[width=7cm]{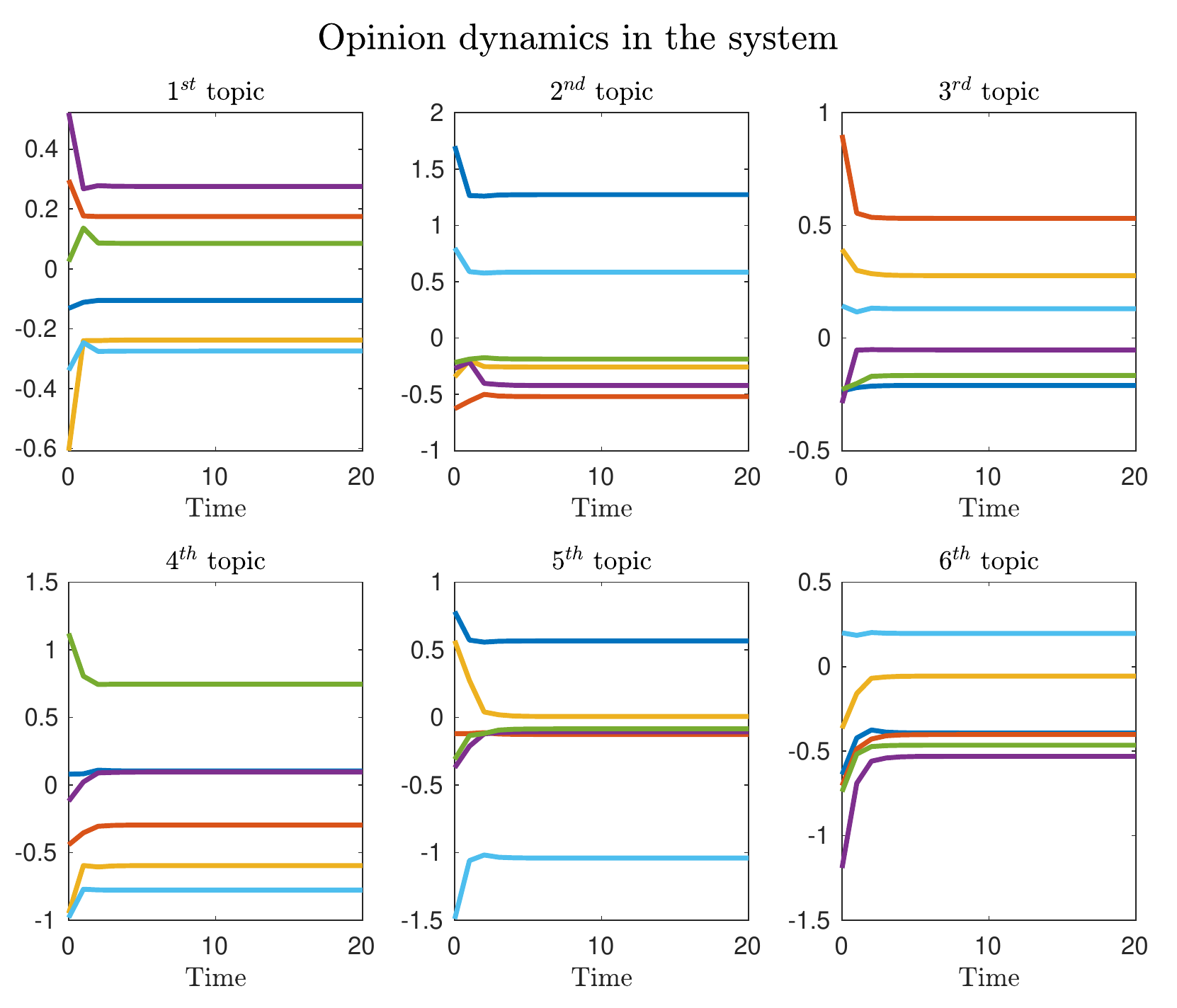}

  \smallskip
  
         \includegraphics[width=7cm]{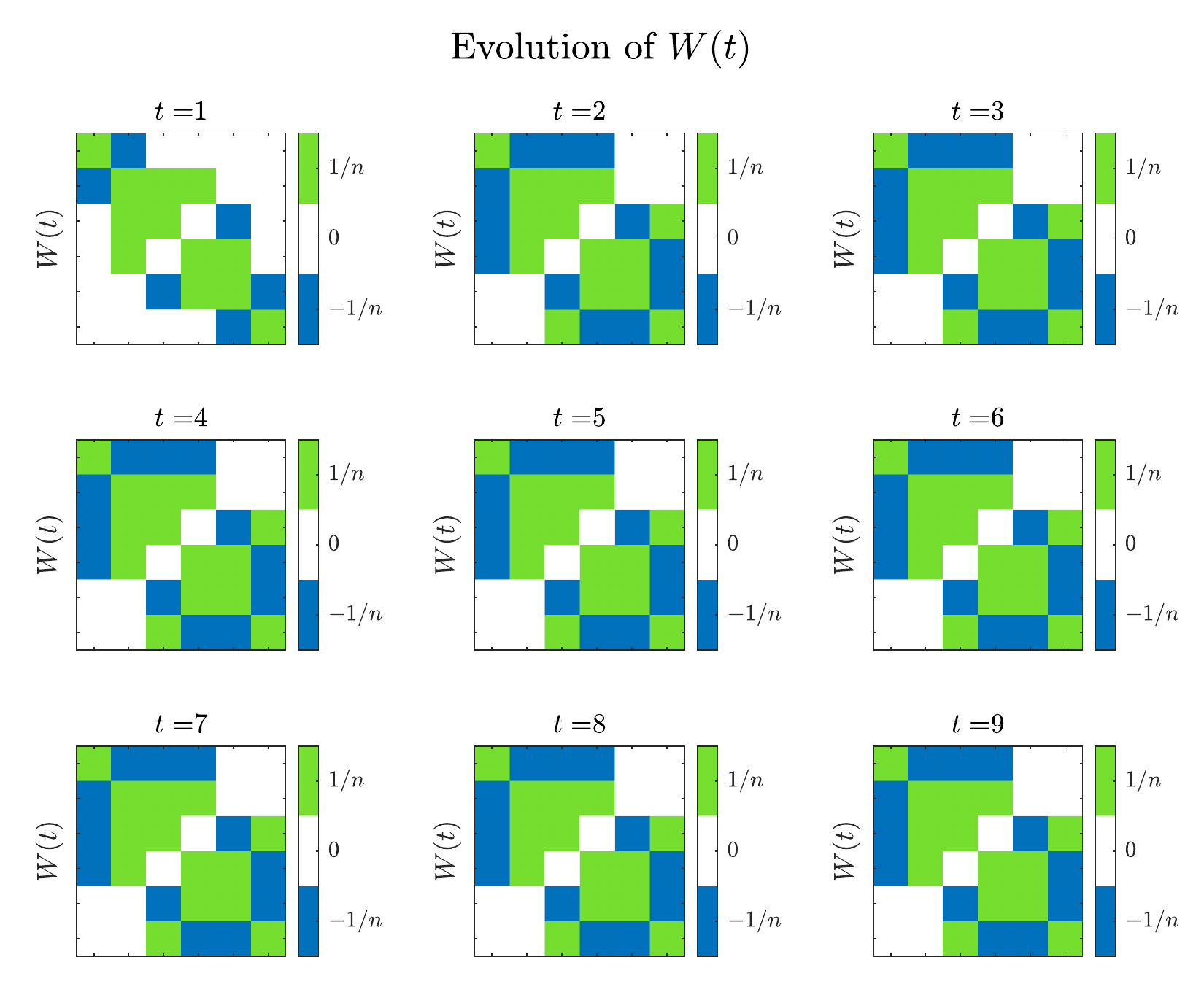}    
    \smallskip      
 
         \includegraphics[width=7cm]{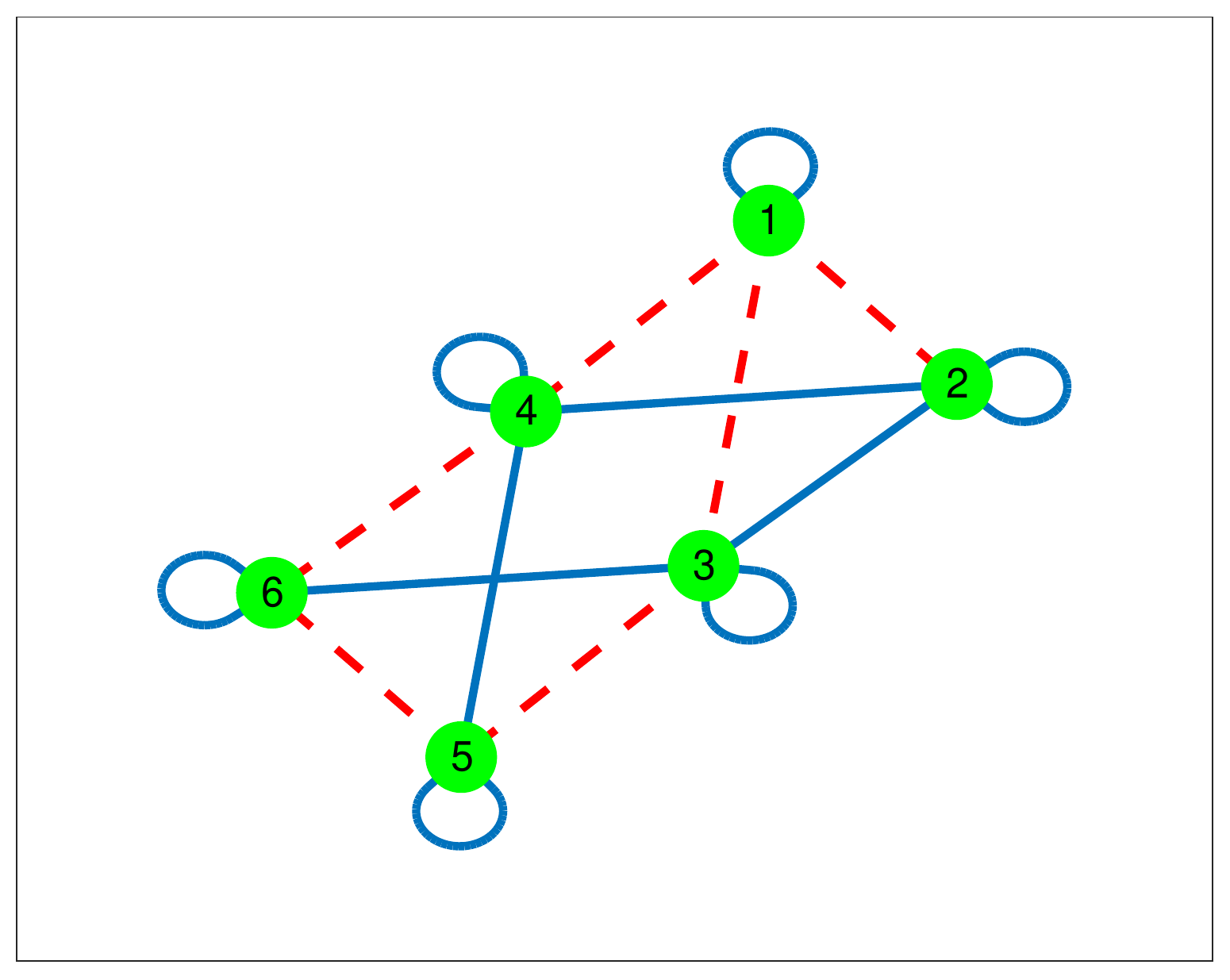}

     \caption{Example 1: Evolutions of the opinions on the $6$ topics (top);\\ Evolution of the influence matrix (middle); Graph associated with $W_\infty$ (bottom).} \label{figura1}

\end{center}    
\end{figure}

\medskip

{\bf Example 2.}\ We consider a group of $n= 5$ agents discussing $m=3$ topics.  We assume that $\theta_1=0.1174,  \theta_2=0.2967, \theta_3=	0.3188, \theta_4=	0.4242, \theta_5=0.5079,$ and that $Y(0)$ is:  

\begin{footnotesize}
$$Y(0)= \begin{bmatrix} 
-18.8898 &  47.9748  &  9.4896\cr
  42.3380 &  -6.1130 & -23.7788\cr
  -6.9793  &-38.8881 &  10.2843\cr
 -31.5184 & -24.1935 &  21.1216\cr
  40.4881  & -9.1280 & -27.8253\end{bmatrix}.$$
  \end{footnotesize}
 The evolutions of the opinions on the $3$ topics as well as the evolution of the influence matrix are illustrated in Figure \ref{figura2}.


 \begin{figure}

  \begin{center}     
         \includegraphics[width=7cm]{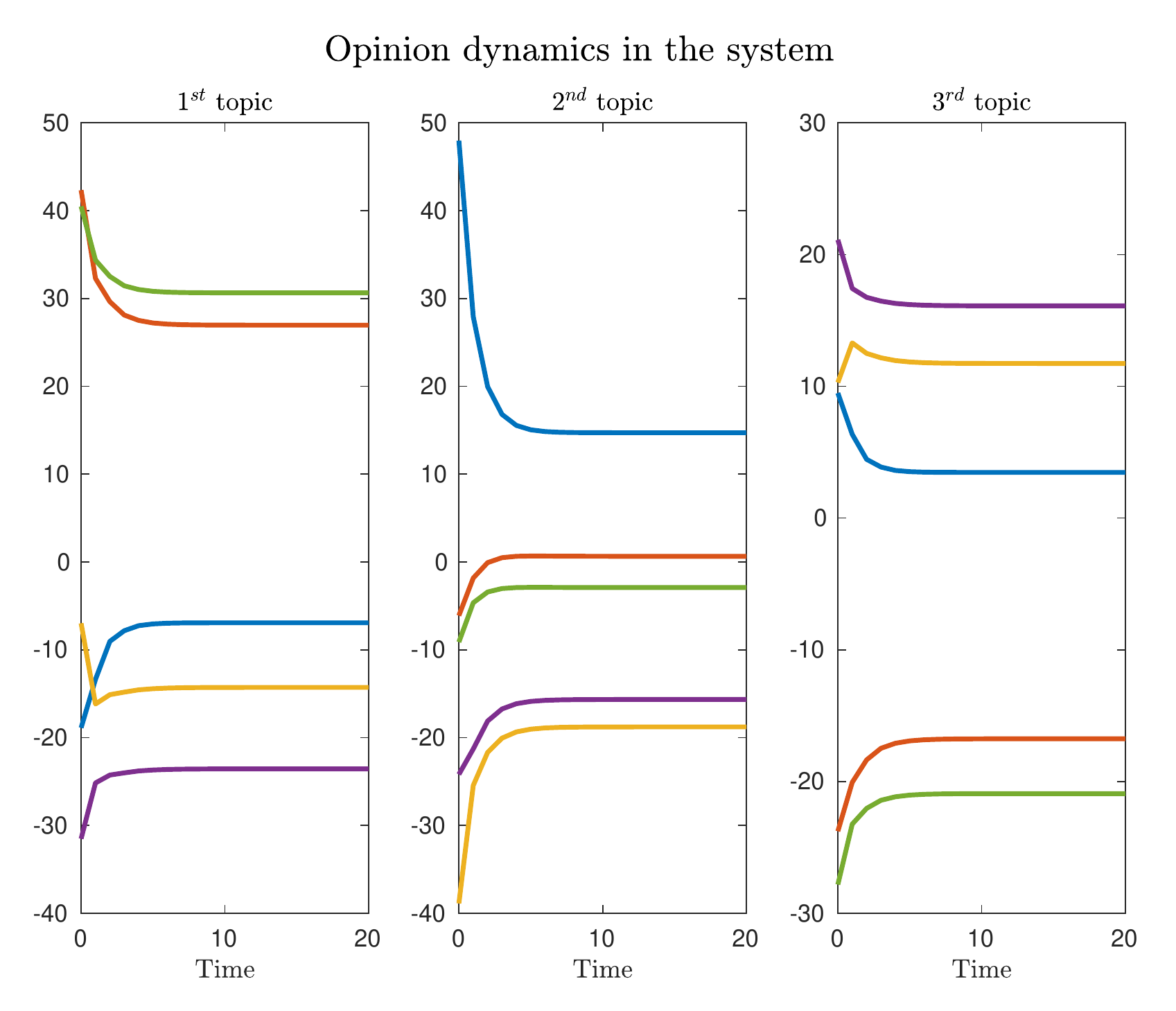}

  \smallskip
  
         \includegraphics[width=7cm]{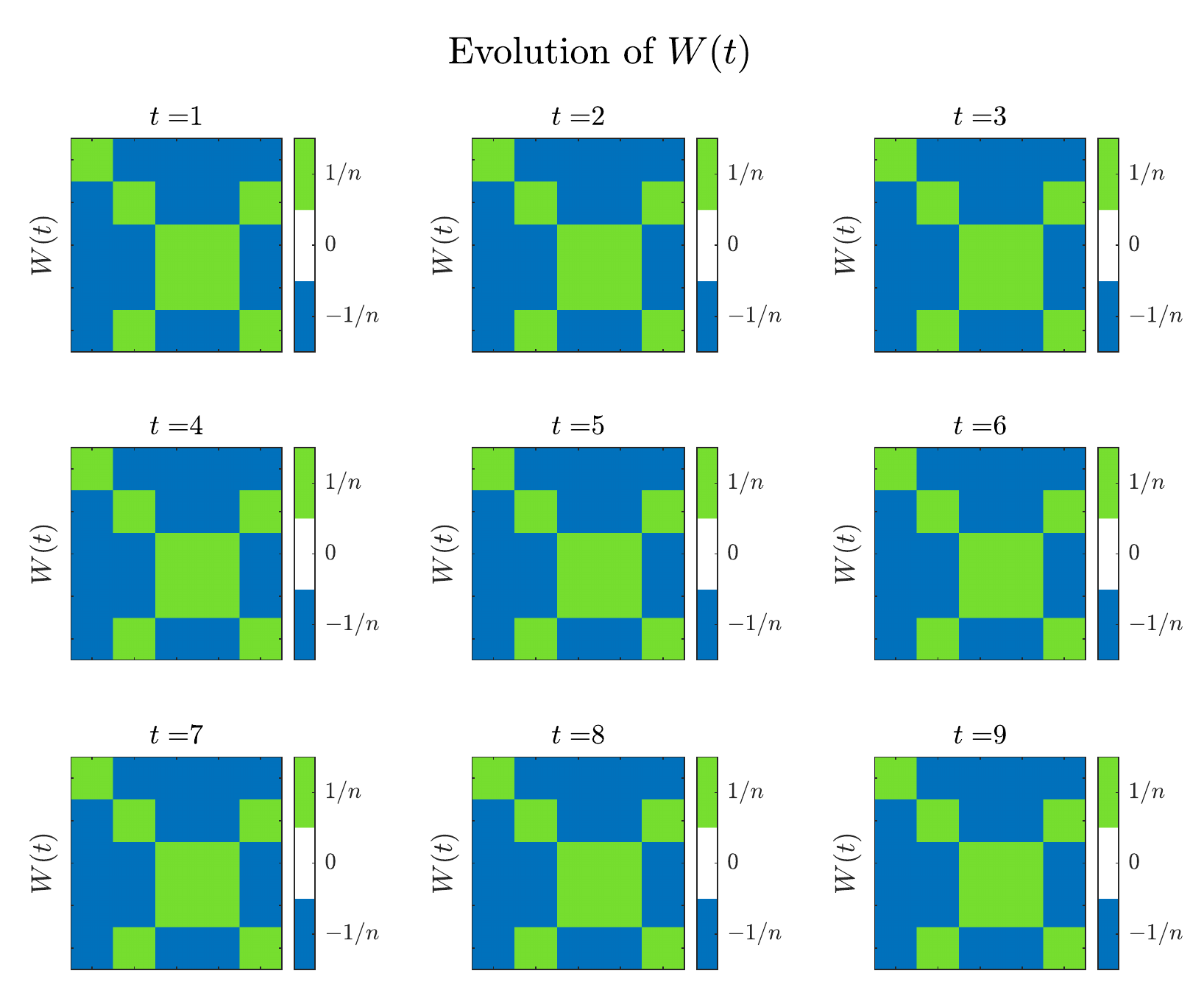}    
         
 \smallskip
 
         \includegraphics[width=7cm]{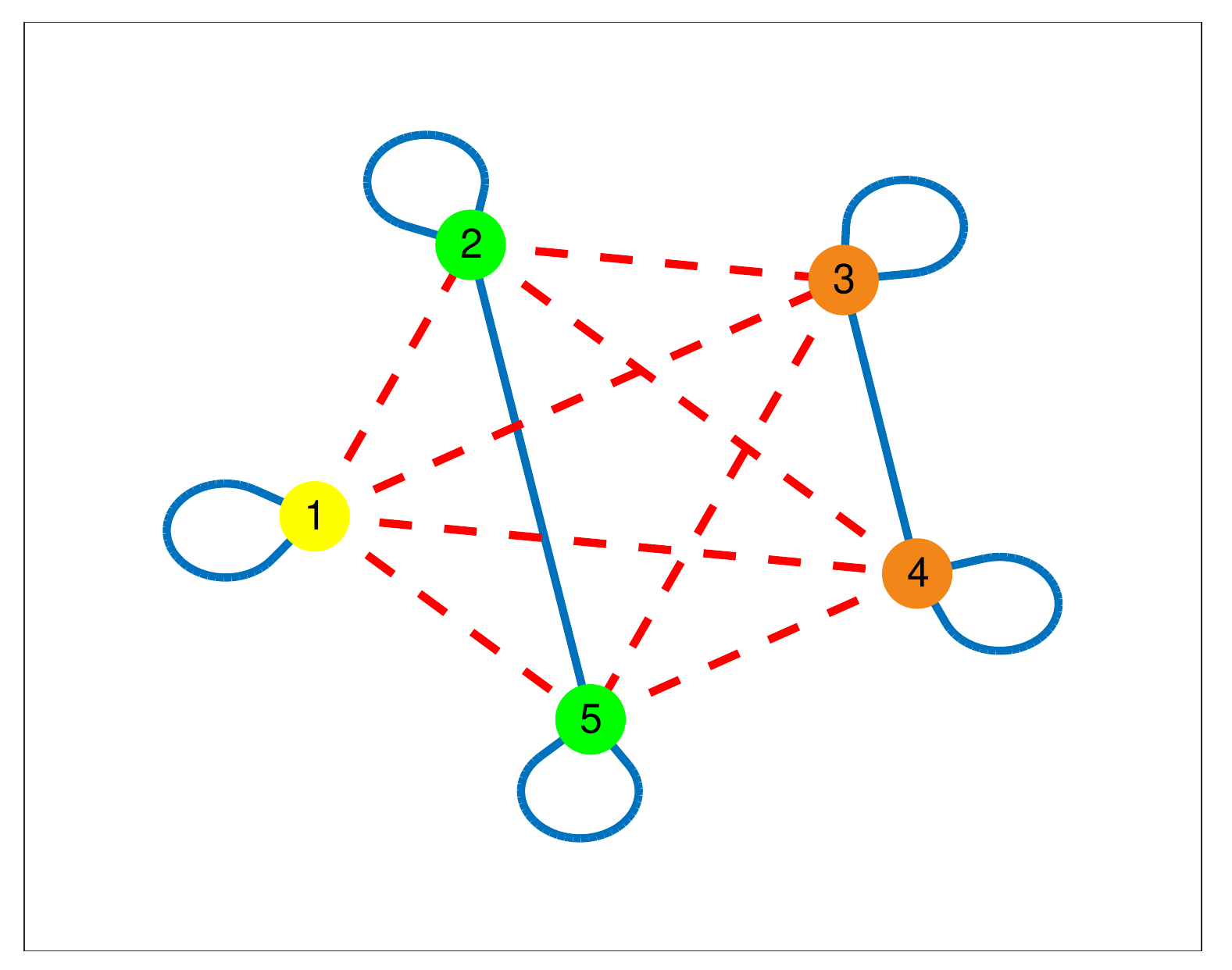}


     \caption{Example 2: Evolutions of the opinions on the $3$ topics (top);\\ Evolution of the influence matrix (middle); Graph associated with $W_\infty$ (bottom). }\label{figura2}
 \end{center} 
\end{figure}
%
 

 
\section{Conclusions} \label{concl}
In this paper we  proposed an extended version of the Friedkin-Johnsen model with a time-varying influence matrix,
 accounting for cooperative and competitive interactions among individuals. In particular, we  
 assumed that the influence matrix updates based on 
a homophily mechanism. We   proved that the agents' opinions asymptotically converge even if the structure of the network varies over time and the individuals are not all friendly with each other. Moreover, we   also highlighted some interesting properties of the limit values of the  transition matrix  and of the influence matrix.
Finally, in the  special case where $m=1$, namely there is a single discussion topic,
 the asymptotic behavior of the opinion vector was derived in closed form.


\end{document}